\documentclass{article}
\usepackage{graphicx}
\usepackage[utf8]{inputenc}
\usepackage{algorithmic}
\usepackage{algorithm}
\usepackage{amsmath}
\usepackage{amssymb}
\usepackage{amsthm}
\usepackage{xcolor}
\usepackage{bm}
\usepackage{enumitem}
\usepackage{braket}
\usepackage{mathrsfs}
\usepackage{listings}
\usepackage{hyperref}
\usepackage{soul}

\newcommand{\RR}{\mathbb{R}}

\newcommand{\ip}[1]{\langle #1 \rangle}


\newcommand{\f}{\mathbb}
\newcommand{\vf}{\varphi}

\newcommand{\wt}{\widetilde}

\newcommand{\ms}[1]{\{#1^{(N)}\}_N}
\newcommand{\GLT}{\sim_{GLT}}

\newcommand{\acs}{\xrightarrow{a.c.s.}}

\newcommand{\ve}{\varepsilon}

\newcommand{\nn}{{\bm n}}
\newcommand{\ii}{{\bm i}}
\newcommand{\jj}{{\bm j}}

\newcommand{\kk}{{\bm k}}
\newcommand{\uu}{{\bm 1}}

\newcommand{\svs}{\sim_\sigma}
\newcommand{\btheta}{\boldsymbol\theta}

\DeclareMathOperator{\rk}{rk}

                \newcommand{\bfx}{\mathbf x}

\usepackage[top=2.00cm,bottom=3.00cm,left=2.00cm,right=2.00cm]{geometry}

\lstset{basicstyle=\small\ttfamily}
\setlength{\parindent}{12pt}
\theoremstyle{definition}
\newtheorem{definition}{Definition}
\theoremstyle{plain}
\newtheorem{theorem}{Theorem}

\newtheorem{lemma}{Lemma}
\newtheorem{remark}{Remark}

\title{Extension and convergence analysis of Iterative Filtering to~spherical~data}

\author{Giovanni Barbarino\thanks{University of Mons, Mons, Walloon Region, Belgium},Roberto Cavassi\thanks{University of L'Aquila, L'Aquila, Italy},Antonio Cicone\thanks{University of L'Aquila, L'Aquila, Italy, and Istituto di Astrofisica e Planetologia Spaziali, INAF, Rome, Italy, and Istituto Nazionale di Geofisica e Vulcanologia, Rome, Italy}}

\begin{document}

\maketitle

\begin{abstract}
 Many real-life signals are defined on spherical domains, in particular in geophysics and physics applications. In this work, we tackle the problem of extending the iterative filtering algorithm, developed for the decomposition of non-stationary signals defined in Euclidean spaces, to spherical domains. We review the properties of the classical Iterative Filtering method, present its extension, and study its convergence in the discrete setting. In particular, by leveraging the Generalized Locally Toeplitz sequence theory, we are able to characterize spectrally the operators associated with the spherical extension of Iterative Filtering, and we show a counterexample of its convergence. Finally, we propose a convergent version, called Spherical Iterative Filtering, and present numerical results of its application to spherical data.
\end{abstract}

\section{Introduction}

Real-life signals are mainly non-stationary, i.e. their features change over time or space, and are produced by nonlinear phenomena. This implies that classical signal processing methods, like Fourier or Wavelet Transform, can prove to be limited for the analysis and decomposition of such signals. For this reason in the late 90s, a group of researchers at NASA headed by Norden Huang developed a completely new approach for non-stationary signal processing called Empirical Mode Decomposition (EMD).

The EMD method, the first of its kind, is based on the iterative computation of the signal moving average via envelopes connecting its extrema. The computation of the signal moving average allows to split the signal itself into a small number of simple and non-stationary oscillatory components, called Intrinsic Mode Functions (IMFs), which are separated in frequencies and almost uncorrelated \cite{huang1998empirical}.

EMD proved to be a really powerful method in many applied fields of research
\cite{abbasimehr2020optimized, cao2019financial, cui2021rolling, holmberg2017influence, stetco2019machine, zeng2020review, zhang2020abrupt}. However, it is prone to instability and the so-called mode-mixing \cite{wu2009ensemble}. For this reason, many EMD variants have been proposed over the years, like the Ensemble Empirical Mode Decomposition (EEMD) \cite{wu2009ensemble}, the complementary EEMD \cite{yeh2010complementary}, the complete EEMD \cite{torres2011complete}, the partly EEMD \cite{zheng2014partly}, the noise assisted multivariate EMD (NA-MEMD) \cite{ur2011filter} and many others. They all allow us to address the instability issue as well as to reduce the so-called mode mixing problem.
However, EMD and all these variants are still missing a rigorous mathematical analysis, due to the usage of a number of heuristic and ad hoc elements for the computation of the signal moving average. Some results have been presented in the literature \cite{huang2009convergence, huang2014introduction, ur2011filter}, but a complete analysis is still missing.

Given these limitations, but also the considerable attention garnered by these methodologies within the global scientific community, numerous research groups have embarked on investigations into this domain, proffering alternative approaches to signal decomposition. Noteworthy methodologies include the Variational mode decomposition \cite{dragomiretskiy2013variational}, sparse time-frequency representation \cite{hou2011adaptive, hou2009variant}, Blaschke decomposition \cite{coifman2017carrier}, Geometric mode decomposition \cite{yu2018geometric},  Empirical wavelet transform \cite{gilles2013empirical}, and analogous techniques \cite{meignen2007new, pustelnik2012multicomponent, selesnick2011resonance}. All these techniques hinge on optimization relative to a predetermined basis.

The sole alternative method presented in existing literature founded on iterative principles and thus obviating the need for a priori assumptions about the signal under study is the Iterative Filtering (IF) algorithm \cite{lin2009iterative}. We recall here also its fast implementation via FFT, denoted as Fast Iterative Filtering (FIF) \cite{cicone2020numerical}, along with its extensions, namely Adaptive Local Iterative Filtering (ALIF) \cite{cicone2016adaptive} and Resampled Iterative Filtering (RIF) algorithms \cite{ barbarino2021stabilization, barbarino2022conjectures}, which are designed to handle signals characterized by pronounced non-stationarities, such as chirps, whistles, and multipath. Despite their recent publication, these iterative methods have found effective application across diverse domains \cite{ghobadi2020disentangling, li2018entropy, materassi2019stepping, mitiche2018classification, papini2020multidimensional, piersanti2020inquiry, piersanti2020magnetospheric, sharma2017automatic,  spogli2019role, spogli2021adaptive,  yu2010modeling}.

The structural framework of the IF, ALIF, and RIF algorithms mirrors that of EMD, with the primary divergence lying in the computation of the signal moving average. Unlike EMD, these methods compute the signal moving average through convolution with a preselected filter function, as opposed to employing the mean between two envelopes. This seemingly straightforward distinction has paved the way for a comprehensive mathematical analysis of IF, ALIF, and RIF. Notably, \cite{cicone2020numerical} and \cite{huang2009convergence} delve into the mathematical analysis of the IF algorithm, identifying a priori conditions for its convergence. In \cite{cicone2020study}, the examination of boundary effects in the IF algorithm is conducted, presenting a formula to preemptively estimate the extent of error propagation within each component of a decomposition. \cite{stallone2020new} introduces a methodology to mitigate boundary effects across decomposition algorithms, including IF, and investigates IF's efficacy in segregating components originating from stochastic processes. Building on the analyses conducted for EMD \cite{rilling2007one} and Synchrosqueezing \cite{wu2011one}, \cite{cicone2024one} scrutinizes IF's ability to disentangle two stationary frequencies within a signal. Lastly, \cite{barbarino2021stabilization}, \cite{barbarino2022conjectures}, and \cite{cicone2019spectral} delve into the convergence of the ALIF method and propose the alternative, faster, and convergent RIF algorithm.

EMD and IF methods have been extended to deal with higher dimensional \cite{ cicone2017multidimensional, rilling2007bivariate, tanaka2007complex, thirumalaisamy2018fast} and multivariate signals \cite{cicone2022multivariate, rehman2010multivariate} defined in a Euclidean space. Regarding the extension of this method to non-Euclidean spaces, and in particular, to the case of a sphere, a version of EMD has been proposed in the paper \cite{fauchereau2008empirical}, whereas IF has never been extended yet. However, there are many interesting real-life applications in which the data are sampled on a spherical domain. We can think, for instance, of measurements regarding the Earth, like the surface temperature \cite{yun2019new} and pressure \cite{sweeney2021products}, or geophysical quantities measured via satellites, like the Earth's magnetic and electric field as measured by the ESA \cite{friis2006swarm}, NOAA \cite{loto2019goes} and CNSA \cite{cao2018electromagnetic} missions, and the Earth's gravitational field \cite{marsh1988new}, or astrophysical measurements, like the cosmic microwave background \cite{kamionkowski1999cosmic}.
All these kinds of data have been studied so far using classical linear approaches, like spherical harmonic analysis and Fourier transform in higher dimension \cite{hu2002cosmic, klosko1982spherical, pierret2013optimal, thebault2021spherical}. There is thus a need to develop new algorithms able to handle the nonstationarities contained in these signals.

In this work, we propose an extension of IF to spherical data and study the a prior convergence of this method in the discrete setting.
The work is organized as follows: in Section \ref{sec:IF} we review IF and introduce its generalization to the sphere. In Section \ref{sec:GLT} the Generalized Locally Toeplitz (GLT) theory is recalled, in particular, for the case of 2-level GLT sequences, and used in order to study the convergence properties of the proposed method. Section \ref{sec:Num_Ex} reports a convergent version of Spherical Iterative Filtering (SIF) and numerical examples of its application. The paper ends with conclusions and future research directions.

\section{Iterative Filtering and its adaptation to the sphere}\label{sec:IF}

In order to have a better understanding of how we adapted Iterative Filtering on the sphere, it is useful to have a brief description of the original algorithm.

\subsection{How Iterative Filtering works}
As stated in the introduction, the key difference between EMD and Iterative Filtering (IF) is the computation of the local average.
As stated in the name, in IF methods the computation of the local average is made through filtering, which means convolution with a function called a filter.
\begin{definition}\label{def:filter} $ $
\begin{itemize}
    \item [(1)] A function $w:[-l, l]\to \mathbb{R}$ is a filter if it is nonnegative, even, bounded, continuous, and $\int_{\mathbb{R}} w(t)\,dt = 1$.
    \item[(2)] A double convolution filter $w$ is the self-convolution of a filter $\tilde{w}$, that is $w=\tilde{w}*\tilde{w}$.
    \item[(3)] The size, or the length, of a filter $w$ is half the measure of its support.
\end{itemize}
\end{definition}

Before describing the pseudo-code of Iterative Filtering, it is necessary to define the Sifting operator. This operator will be applied iteratively in the IF methods in order to extract the Intrinsic Mode Functions (IMF).

\begin{definition}[\textbf{Sifting operator}]\label{sifting}
    Let $f \in L^2(\mathbb{R})$  be a filter. The moving average of a signal
    $g\in\RR^n$ can be computed as
\begin{equation}\label{eq:moving_average_IF}
    \mathcal L(g)(x) = \int_{\mathbb{R}}g(t)f(x-t)dt.
\end{equation}
    The associated sifting operator is $\mathcal M:L^2(\mathbb{R})\to L^2(\mathbb{R})$ s.t.
\begin{equation}
    \mathcal M(g)(x):= (g-f*g)(x) = g(x) - \mathcal L(g)(x)
\end{equation}
\end{definition}

In practical applications we always study the signal $g(x)$ on an interval, say $[0,1]$. Outside this interval, the signal is usually not known, so we have to impose some boundary conditions, discussed for example in \cite{cicone2020study,stallone2020new}. In particular, in \cite{stallone2020new} the authors show how any signal can be pre-extended and made periodic at the boundaries, for example, by reflecting the signal on both sides and making it decay. Therefore, for simplicity and without losing generality, we will assume that the signals to be decomposed are $1$-periodic.

 In a discrete setting, a signal is usually given as a vector of sampled values $\bm g = [g_i]_{i=0,\dots,n-1}$ where $g_i = g(x_i)$ and $x_i = i/n$ for $i\in \mathbb Z$. As a consequence, one can discretize the IF moving average \eqref{eq:moving_average_IF} with a simple quadrature formula
\begin{equation}\label{eq:moving_average_dIF}
	\mathcal L(g)(x_i) = \int_{\mathbb R} g(z)f(x_i-z) \,\,{{\rm d}}z \approx \frac 1{nK}\sum_{j\in\mathbb Z} g_j f(x_i-x_j).
\end{equation}
Since the filter $f(z)$ has compact support of size $2L$, the above formula is always well-defined. Here $K$ is a normalizing constant depending on $f(z)$ and $n$ defined as
\[
K : = \frac 1{n}\sum_{j\in\mathbb Z} f(x_i-x_j) \approx  \int_{\mathbb R} f(z) \,\,{{\rm d}}z  = 1
\]
ensuring that the quadrature formula actually performs a local convex combination of the signal points $g_i$, akin to the averaging operation performed by the convolution in the continuous case.

Equation \eqref{eq:moving_average_dIF} can be expressed through a Hermitian circulant matrix $F$ with first row
\[\bm f_1 :=
\frac{1}{nLK}
\left[
f(0), \, f\left( \frac{1}{nL}  \right)
, \dots
, \, f\left( \frac{s}{nL}  \right)
, \, 0
\dots
\, 0
, \, f\left( \frac{s}{nL}  \right)
, \dots
, \, f\left( \frac{1}{nL}  \right)
\right]
\]
where $s = \lfloor nL \rfloor < \lfloor n/6 \rfloor$.
The sampling vector of $\mathcal M(g)$ on the points $x_0,x_1,\dots, x_{n-1}$, that we indicate as $\mathcal M(\bm g)$, is thus rewritten as a matrix-vector multiplication
\begin{equation}\label{eq:matrix_K_IF}
	\mathcal M(g)(x_i) = g_i - \mathcal L(g)(x_i)
	\implies
	\mathcal M(\bm g) = (I - F)\bm g.
\end{equation}

\begin{theorem}[\cite{cicone2020numerical}]\label{thm:FIF_}
Given a signal $\bm g\in\RR^n$, assuming that we are considering a double convolution filter, the iterated application of the Sifting operator to the sampling of the signal converges to
\begin{equation}\label{eq:discreteIMF}
\textrm{IMF}_1=\lim_{m\rightarrow \infty}(I-F)^m \bm g=U Z U^T \bm g
\end{equation}
where $Z$ is the diagonal binary matrix containing in the diagonal an entry equal to $1$ in correspondence to each zero component of the discrete Fourier Transform of the filter $f$, and $U$ is the orthogonal matrix containing, as columns, the Fourier basis vectors.
\end{theorem}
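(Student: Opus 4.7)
The natural approach is to exploit the fact that $F$ is a real symmetric circulant matrix by construction, hence jointly diagonalized with every other circulant matrix by the Fourier basis. Writing $F = U\Lambda U^T$ with $U$ the matrix of Fourier basis vectors and $\Lambda = \operatorname{diag}(\lambda_0,\ldots,\lambda_{n-1})$, the eigenvalues $\lambda_j$ are precisely the DFT coefficients of the first row $\bm f_1$ (up to the chosen normalization). Since $(I-F)^m = U(I-\Lambda)^m U^T$, the problem reduces to computing $\lim_{m\to\infty}(1-\lambda_j)^m$ for each $j$ independently.

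The next step is to prove that $\lambda_j \in [0,1]$ for every $j$. The upper bound $\lambda_j \leq 1$ follows from the normalization by $K$ in the construction of $\bm f_1$, which ensures that $F$ is a nonnegative matrix with row sums equal to $1$; then $F\bm 1 = \bm 1$ and $\|F\|_\infty = 1$, so Gershgorin (or Perron--Frobenius applied to the nonnegative symmetric stochastic matrix $F$) yields $|\lambda_j| \leq 1$. The lower bound $\lambda_j \geq 0$ is where the double-convolution hypothesis $f = \tilde w * \tilde w$ enters: it allows one to express $\bm f_1$ as a circular self-convolution of a real vector $\tilde{\bm f}_1$, so that $F = \tilde F^{\,2}$ for a Hermitian circulant $\tilde F$. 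Consequently $\Lambda = \tilde\Lambda^{\,2}$ is positive semidefinite, and the matrix $Z$ of the statement identifies precisely those indices $j$ for which $\lambda_j=0$.

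Once $\lambda_j \in [0,1]$ is established, the limit is elementary: $(1-\lambda_j)^m \to 0$ whenever $\lambda_j \in (0,1]$, while $(1-\lambda_j)^m = 1$ for every $m$ whenever $\lambda_j = 0$. Therefore $(I-\Lambda)^m \to Z$ entrywise, and passing back through $U$ yields $(I-F)^m \to U Z U^T$. Multiplying on the right by $\bm g$ gives the claimed formula for $\textrm{IMF}_1$.

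The main obstacle is making the identity $F = \tilde F^{\,2}$ in the second paragraph fully rigorous: the sampled values of the continuous convolution $\tilde w * \tilde w$ are not literally the circular discrete self-convolution of the samples of $\tilde w$. The cleanest fix is to work directly in the Fourier domain, where the pointwise identity $\widehat{f}=\widehat{\tilde w}^{\,2}\geq 0$ transfers to the discrete setting provided the quadrature rule and normalization in the definition of $\bm f_1$ are chosen consistently with the double-convolution structure; alternatively one slightly redefines the discrete filter so that the circulant self-convolution relation holds exactly. Once nonnegativity of the $\lambda_j$ is secured, everything else is standard linear algebra of circulant matrices.
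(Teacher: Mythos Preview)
The paper does not contain a proof of this theorem: it is quoted from \cite{cicone2020numerical} and immediately used as a black box (``This theorem guarantees the a priori convergence of the IF algorithm\ldots''), so there is no in-paper argument to compare against.

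Your outline is correct and is, in fact, the standard argument used in the cited reference: diagonalize the Hermitian circulant $F$ by the Fourier basis, identify the eigenvalues with the DFT of the first row, and reduce the problem to showing $\lambda_j\in[0,1]$ so that $(1-\lambda_j)^m\to \chi_{\{\lambda_j=0\}}$. The stochasticity of $F$ gives $|\lambda_j|\le 1$, and the double-convolution hypothesis is precisely what forces $\lambda_j\ge 0$.

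The subtlety you flag in the last paragraph is real but is resolved in \cite{cicone2020numerical} in the second of the two ways you suggest: the discrete filter vector is itself \emph{defined} as the discrete circular self-convolution of the sampled base filter (rather than as the samples of the continuous self-convolution), so that $F=\tilde F^{2}$ holds exactly at the matrix level and positive semidefiniteness is immediate. With that convention in place there is no residual gap, and your argument goes through verbatim.
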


This theorem guarantees the a priori convergence of the IF algorithm for any possible signal $g$. The pseudocode of the discrete Iterative Filtering is reported in Algorithm \ref{alg:DIF}.

\begin{algorithm}
\caption{\textbf{Discrete Iterative Filtering} IMF = DIF$(\bm g)$}\label{alg:DIF}
\begin{algorithmic}
\STATE IMF = $\left\{\right\}$
\WHILE{the number of extrema of $\bm g$ $\geq 2$}
      \STATE $\bm g_1 = \bm g$
      \WHILE{the stopping criterion is not satisfied}
                  \STATE  compute the filter length $l_m$ for $\bm g_{m}(x)$ and generate the corresponding convolution matrix $F_m$
                  \STATE  $\bm g_{m+1}(x) = (I-F_m) \bm g_{m}(x)$
                  \STATE  $m = m+1$
      \ENDWHILE
      \STATE IMF = IMF$\,\cup\,  \{\bm g_{m}\}$
      \STATE $\bm g=\bm g-\bm g_{m}$
\ENDWHILE
\STATE IMF = IMF$\,\cup\,  \{ \bm g\}$
\end{algorithmic}
\end{algorithm}

The algorithm would run infinitely many times. To have finite time computations it is possible to use a stopping criterion. In the literature, the standard stopping criterion used \cite{lin2009iterative} is the relative error in norm 2
\begin{equation}\label{eq:stopping_criterion}
\frac{\|\mathcal{F}_m^{p+1}(\bm g_m)-\mathcal{M}_m^{p}(\bm g_m)\|_2}{\|\mathcal{F}_m^{p}(\bm g_m)\|_2}\leq \delta.
\end{equation}
for a prefixed $\delta>0$.

In the following, we want to extend these results to the case of the sphere.

\subsection{The spherical setting}

\subsubsection{A useful operator}
The topology of $\mathbb{S}^2$ is the main difference from the classic setting of Iterative Filtering. In fact, since the sphere is not diffeomorphic to a portion of $\mathbb{R}^2$, at least two charts are necessary to make an atlas, as a consequence, it is impossible to extend the convolution as it is defined on $\mathbb{R}^n$.

In the literature, many operators have been defined on the sphere in order to adapt some of the properties of the convolution. In particular, the most interesting one that fits better our necessities is the one called isotropic convolution.
Before its definition, it is necessary to describe some properties of $SO(3)$, the rotation group of the sphere.

Given $f\in \mathcal{H}^0(\mathbb{S}^2)$, where ${H}^0(\mathbb{S}^2):=\{f\in L^2(\mathbb{S}^2): f(\theta,\phi) = f(\theta)\}$ which is the subset of functions in $L^2(\mathbb{S}^2)$ with azimuthal/rotational symmetry about the north pole axis, it is easy to see that $\forall \rho \in SO(3), \exists z_0 \in \mathbb{S}^2$ s.t.
$$
    \mathcal{R}_{\rho}f(z)=\mathcal{R}_{z_0}f(z),
$$
where $\mathcal{R}_{z_0}(f)$ can be seen as a translation of the axis of $f$, that now is the one passing through $z_0$.
That said, it is possible to introduce the following notation:
$$
    f_{z_0}:=\mathcal{R}_{z_0}f
$$
and the convention that if $f \in \mathcal{H}^0(\mathbb{S}^2)$, then $f_0:=f$.



Now we are ready to define the following.

\begin{definition}[\textbf{Isotropic convolution on the sphere}]\label{isoconv}
    Let $g \in L^2(\mathbb{S}^2), f\in \mathcal{H}^0(\mathbb{S}^2)$, their isotropic convolution $g \circledast f$ is defined as:
$$
   g \circledast f (z) = \int_{\mathbb{S}^2}g(\omega)f_z(\omega)d\omega
$$

\end{definition}

It is useful to notice some interesting properties of this operator. First of all, the isotropic convolution is very different from a convolution, mainly for the strong requests on the function $f$.
Another observation is that the resulting function is defined on $\mathbb{S}^2$ thanks to the property mentioned above of the action of $SO(3)$ for functions in $\mathcal{H}^0(\mathbb{S}^2)$. If $f$ was a generic function in $L^2(\mathbb{S}^2)$, the resulting function would have been defined on $SO(3)$.
Finally, we present another definition of isotropic convolution that can be seen in literature \cite{iso-conv}, but it is clear that the meaning remains the same.
\begin{definition}[\textbf{Isotropic convolution on the sphere}]\label{isoconv2}
    Let $g \in L^2(\mathbb{S}^2), F\in L^2([-1,1])$, their isotropic convolution $g \circledast F$ is defined as:
$$
   g \circledast F (z) = \int_{\mathbb{S}^2}g(\omega)F(z \cdot \omega)d\omega
$$
\end{definition}

It is easy to prove that Definition \ref{isoconv} and Definition \ref{isoconv2} are equivalent when $F(n\cdot w) = f(w)$ with $n$ be the north pole. In fact, notice that changing the vector $n$ to $z$  corresponds to a rotation of the function $f$.
This alternative definition is interesting because it makes clearer the role of $F$ as a kernel: it modulates the values of $g$, depending on the distance from a certain point $z$.

\subsubsection{Adaptation of Iterative Filtering to the sphere}

The isotropic convolution allows us to adapt Iterative Filtering to the spherical case.
Taking $f$ with connected support, non-negative, with $||f||_{L^1}=1$ and, as requested, axially symmetric, then it can be seen as a weight function that we can use to evaluate the local average around the north pole. Hence $\int_{\mathbb{S}^2}g(\omega)f_z(\omega)d\omega$ can be seen as the evaluation of the local average of the function $g$ in $z$, using $f$ as a weight function on this manifold. From now on, a function $f$ is a filter if it possesses all the above properties.

Now it is possible to define the main operator.

\begin{definition}[\textbf{Sifting operator on the sphere}]\label{siftingS}
    Let $f \in \mathcal{H}^0(\mathbb{S}^2)$  be a filter. Its associated sifting operator is $\mathcal M:L^2(\mathbb{S}^2) \to L^2(\mathbb{S}^2)$ defined as    $$
    \mathcal M(g)(z):=(g- g \circledast f)(z) = g(z) - \int_{\mathbb{S}^2}g(\omega)f_z(\omega)d\omega
    $$

\end{definition}

It is easy to show that this operator is linear and bounded. In order to have the convergence of the limit
\begin{equation}\label{limitM}
  \lim_{n\to \infty, n\in \mathbb{N}} \mathcal M^n(g)
\end{equation}
$\forall g \in L^2(\mathbb{S}^2)$,  it is necessary to prove that $||\mathcal M||_{op}\leq 1$.

Since we want to study the discretized version of this operator, we have to study if the spectral radius $\rho(\mathcal M)\leq 1$ for the sifting operator obtained from a certain filter function $f$.
Since the only eigenvalue of the identity is $1$, it is necessary to study the discretized version of the operator $\cdot \circledast f$ and see when its eigenvalues are in the complex circle centered in $1$ with radius $1$.

To proceed, it is necessary to define the discretization grid and methods: the elements of the mesh grid are called $\{z_j\}_j$, and their associated partition of the sphere is $\{S_j\}_j$, such that $\bigcup_j S_j=\mathbb{S}^2$ and $S_i \cap S_j = \emptyset$ if $i \neq j$.

Now, the isotropic convolution can be written as:
$$
g \circledast f (z) = \int_{\mathbb{S}^2}g(\omega)f_{z}(\omega)d\omega = \sum_j \int_{S_j} g(\omega) f_{z}(\omega)d\omega
$$
If we study this operator only for $g$ in the subspace of $L^2(\mathbb{S}^2)$ generated by $\{ \chi_{S_j} \}_j$, it is possible to define the matrix associated with this operator as:

\begin{equation}\label{eq:sifting_matrix}
    B_{k,j}:=\frac{1}{\sigma(S_k)}\int_{S_k}\int_{S_j} f_{\omega}(r)d\sigma (r) d\sigma (\omega)
\end{equation}

Where $\sigma(S_k)$ is the measure of $S_k$ as a set on the surface of the unitary sphere.
It is interesting to notice that the normalisation and the first integral are necessary in order to evaluate the average of the resulting smoothed function and to project it in $\text{span}(\{ \chi_{S_j} \}_j)$.
On the other hand, the second integral is necessary to describe the isotropic convolution.
It is easy to prove that $B$ is stochastic since its elements are non-negative and the sum of the elements on one row is

$$
\sum_j \frac{1}{\sigma(S_k)}\int_{S_k}\int_{S_j} f_{\omega}(r)d\sigma (r) d\sigma (\omega) = \frac{1}{\sigma(S_k)}\int_{S_k}\int_{\mathbb{S}^2}f_{\omega}(r)d\sigma (r) d\sigma (\omega) = \frac{1}{\sigma(S_k)}\int_{S_k}||f_{\omega}||_{L^1}d\sigma (\omega) = 1
$$

An interesting property of this class of matrix is that its spectral radius is 1. Hence, it is necessary that the eigenvalues of $B$ are in the intersection between the complex disk centred in $0$ with radius $1$ and the one centred in $1$ with the same radius (as stated above), in order to obtain the convergence of the limit \eqref{limitM}.

\subsubsection{Case of conic filters}

Since the positivity of the eigenvalues is not granted, we choose a certain filter $f_0$ and study the behaviour of its eigenvalues when discretised on a certain family of mesh grids on the sphere.
The following function is called the truncated cone and it can be shown to be a filter.
\begin{equation}
    f_0(\theta,\phi) = \frac{(r-\phi)^+}{C} = \frac{(r-d((0,0),(\theta,\phi)))^+}{C} \quad \textrm{and} \quad f_{(\theta',\phi')}(\theta,\phi) = \frac{(r-d((\theta',\phi'),(\theta,\phi)))^+}{C}
\end{equation}

Where $r$ is the radius of the filter, $C$ a normalizing constant and $d$ is the spherical arc distance between two points.
The grid we decided to study is similar to a Driscoll and Healy mesh grid: $N^2$ points equally spaced in longitude and latitude, excluding the poles.
Later in the paper, we will give a more in-depth description of the coordinates of those points.
We decided to use this type of discretization since it is close to the way satellite data are collected.

\section{GLT Analysis and convergent version of the algorithm}\label{sec:GLT}

\subsection{Theory of 2-level Generalized Locally Toeplitz Sequences}	
Here we recall the basic notions, results and concepts of $2$-level GLT sequences and linked subjects, without going too much into technical details. All the results we report in this section can be found more in detail in \cite[Chapter 6]{GLT-bookII}, altogether with an extensive and complete discussion about the GLT sequences and an extension to $d$-level matrix sequences.
	
	
When dealing with multilevel sequences, matrices and vectors, we will use the multi-index notation.
A multi-index $\ii\in\mathbb N^2$ is simply a vector in $\mathbb N^2$; its components are denoted by $i_1,i_2$.
If we write $X=[x_{\ii\jj}]_{\ii,\jj=\uu}^{\nn}$, then $X$ is a $N^2\times N^2$ matrix whose components are indexed by two $2$-indices $\ii,\jj$, both varying from $\uu = (1,1)$ to $\nn=(N,N)$ according to the lexicographic ordering.	
	In this context, by a sequence of matrices (or matrix-sequence) we mean a sequence of the form $\{A^{(N)}\}_N$ of dimension $N^2\times N^2$. The entries of the matrix $A^{(N)}$ will be indexed by two $2$-indices $\ii=(i_1,i_2)$, $\jj=(j_1,j_2)$, where $1\le i_1,i_2,j_1,j_2\le N$.

\subsubsection{Spectral Symbol and Zero-Distributed Sequences}

 We say that a matrix-sequence $\{A_n\}_n$ admits a spectral symbol (or for short, just symbol) $f(x)$ when the sampling of $f(x)$ on an uniform $n$-grid over its domain yields an approximation of the eigenvalues of $A_n$, that gets better as $n\to \infty$.
In a sense, the symbol encodes the asymptotic behavior of the spectrum for the whole sequence, thus proving to be an invaluable tool for the analysis of iterative methods that make use of the sequence.
The same concept can be expressed also for the singular values of teh matrices in the sequence, and the rigorous formal definition for the symbol is based of an ergodic formula that must hold for every test functions $F\in C_{c}(\mathbb C)$ ($C_{c}(\mathbb R)$) as follows.
	
\begin{definition}[\textbf{asymptotic singular value and eigenvalue distributions of a matrix-sequence}]\label{dd}
Let $\{A^{(N)}\}_N$ be a matrix-sequence with $A^{(N)}$ of size $N^2\times N^2$, and let $f:\Omega\subset\mathbb R^d\to\mathbb C$ be measurable with $0<\mu_d(\Omega)<\infty$.
\begin{itemize}[nolistsep,leftmargin=*]
	\item We say that $\{A^{(N)}\}_N$ has an asymptotic eigenvalue (or spectral) distribution described by $f$ if
	\begin{equation}\label{sd}
\lim_{n\to\infty}\frac1{N^2}\sum_{i=1}^{N^2}F(\lambda_i(A^{(N)}))=\frac1{\mu_d(\Omega)}\int_\Omega F(f(\bm x))\mathrm{d}\bm x,\qquad\forall\,F\in C_c(\mathbb C).
	\end{equation}
	In this case, $f$ is called the eigenvalue (or spectral) symbol of $\{A^{(N)}\}_N$ and we write $\{A^{(N)}\}_N\sim_\lambda f$.
	\item We say that $\{A^{(N)}\}_N$ has an asymptotic singular value distribution described by $f$ if
	\begin{equation}\label{svd}
\lim_{n\to\infty}\frac1{N^2}\sum_{i=1}^{N^2}F(\sigma_i(A^{(N)}))=\frac1{\mu_d(\Omega)}\int_\Omega F(|f(\bm x)|)\mathrm{d}\bm x,\qquad\forall\,F\in C_c(\mathbb R).
	\end{equation}
	In this case, $f$ is called the singular value symbol of $\{A^{(N)}\}_N$ and we write $\{A^{(N)}\}_N\sim_\sigma f$.
\end{itemize}
\end{definition}

	A sequence of matrices $\{Z^{(N)}\}_N$ such that $\{Z^{(N)}\}_N\sim_\sigma0$ is referred to as a zero-distributed sequence. In other words, $\{Z^{(N)}\}_N$ is zero-distributed iff
 \begin{equation}%
     \label{eq:zero_distributed_sequences_definition}%
\lim_{n\to\infty}\frac1{N^2}\sum_{i=1}^{N^2}F(\sigma_i(Z^{(N)}))=F(0),\qquad\forall\,F\in C_c(\mathbb R).
 \end{equation}
	Given a sequence of matrices $\{Z^{(N)}\}_N$, with $Z^{(N)}$ of size $N^2\times N^2$, the following property holds. In what follows, we use the natural convention $C/\infty=0$ for all numbers $C$.
	\begin{enumerate}[leftmargin=23pt]
		\item[\textbf{Z\,2.}]\label{Z2} $\{Z^{(N)}\}_N\sim_\sigma0$ if there exists a $p\in[1,\infty]$ such that $$\lim_{n\to\infty}N^{-2/p}\|Z^{(N)}\|_p=0,$$
		where $\|\cdot\|_p$ is the $p$-Schatten norm.
	\end{enumerate}

 \subsubsection{Approximating Classes of Sequences and Spectral Clustering}
	The space of matrix-sequences also presents  a metric structure, induced by a distance inspired from the concept of \textit{Approximating Class of Sequences} (a.c.s.). In fact, a sequence of matrix-sequences $\{B_m^{(N)}\}_N$ is said to be an a.c.s. for $\ms A$ if there exist $\{E_m^{(N)}\}_N$ and $\{R_m^{(N)}\}_N$ such that for every $m$ there exists $N_m$ with
	\[
	A^{(N)} = B_m^{(N)}  + E_m^{(N)} + R_m^{(N)}, \qquad \|E_m^{(N)}\|\le \omega(m), \qquad \rk(R_m^{(N)})\le N^2c(m)
	\]
	for every $N>N_m$, and
	\[
	\omega(m)\xrightarrow{m\to \infty} 0,\qquad c(m)\xrightarrow{m\to \infty} 0.
	\]
	In this case, we say that $\{B_m^{(N)}\}_N$ is a.c.s. convergent to $\ms A$, and we use the notation $\{B_m^{(N)}\}_N\acs \ms A$.
	In other words, $\{B_m^{(N)}\}_N$ converges to $\ms A$ if the difference $\{A^{(N)} -B_m^{(N)}\}_N$ can be decomposed into $\{E_m^{(N)}\}_N$ of 'small norm' and $\{R_m^{(N)}\}_N$ of 'small rank'.

We enunciate the main property of the a.c.s. we will need in the following.
	\begin{enumerate}[leftmargin=39pt]
		\item[\textbf{ACS\,4.}] \label{ACS4}Let $p\in[1,\infty]$ and assume for each $m$ there is $N_m$ such that, for $N\ge N_m$,
		$$
		\|A^{(N)}-B_m^{(N)}\|_p\le\varepsilon(m,N)N^{2/p}, \quad \lim_{m\to\infty}\limsup_{N\to\infty}\varepsilon(m,N)=0.
		$$
		Then $\{B_m^{(N)}\}_N\stackrel{\rm a.c.s.}{\longrightarrow}\{A^{(N)}\}_N$.
	\end{enumerate}
	
\begin{definition}[clustering of a sequence of matrices]\label{def-cluster}
Let $\{A^{(N)}\}_N$ be a sequence of matrices, with $A^{(N)}$ of size $N^2\times N^2$.
We say that $\{A^{(N)}\}_N$ is strongly clustered at zero (in the sense of the eigenvalues), or equivalently that the eigenvalues of $\{A^{(N)}\}_N$ are strongly clustered at zero, if, for every $\varepsilon>0$, the number of eigenvalues of $A^{(N)}$ of magnitude greater than $\varepsilon$ is bounded by a constant $C_\varepsilon$ independent of~$N$; that is, for every $\varepsilon>0$,
\begin{equation}\label{eq:stong_clustering_at_zero}
\#\{j\in\{1,\ldots,d_n\}:\ |\lambda_j(A_n)|>\varepsilon\}=O(1).
\end{equation}
By replacing ``eigenvalues'' with ``singular values'' and $\lambda_j(A_n)$ with $\sigma_j(A_n)$ in \eqref{eq:stong_clustering_at_zero}, we obtain the definitions of a sequence of matrices strongly clustered at zero in the sense of the singular values.
\end{definition}

	\subsubsection{Multilevel GLT}

	We now recall the theory of the multilevel generalized locally Toeplitz (GLT) sequences and symbols.
	A $2$-level GLT sequence $\{A^{(N)}\}_N$ is a special $2$-level matrix-sequence equipped with a measurable function $\kappa:[0,1]^2\times[-\pi,\pi]^2\to\mathbb C$, the so-called GLT symbol. Unless otherwise specified, the notation
 $$\{A^{(N)}\}_N\GLT\kappa$$
 means that $\{A^{(N)}\}_N$ is a $2$-level GLT sequence with symbol $\kappa$.
	We report here  the main properties of the GLT space we will make use of in this document.
	\begin{enumerate}
		\item[\textbf{GLT 2.}]\label{GLT2} If $\{A^{(N)}\}_N\sim_{\rm GLT}\kappa$ and $\{A^{(N)}\}_N=\{X^{(N)}\}_N+\{Y^{(N)}\}_N$, where
		\begin{itemize}
			\item every $X^{(N)}$ is Hermitian,
			\item $N^{-1}\|Y^{(N)}\|_F\to 0$,
		\end{itemize}
		then $\{A^{(N)}\}_N\sim_{\lambda}\kappa$.
		\item[\textbf{GLT 3.}]\label{GLT3} Here we list three important examples of GLT sequences.
		\begin{itemize}
			\item Given a function $f$ in $L^1([-\pi,\pi]^q)$, its associated Toeplitz sequence is $\{T^{(N)}(f)\}_N$, where the elements are multidimensional Fourier coefficients of $f$:
			\[ T^{(N)}( f ) = [ f_{\ii-\jj} ]^\nn_{\ii, \jj={\bf 1}}, \qquad f_\kk = \frac{1}{(2\pi)^q} \int_{-\pi}^{\pi} f(\btheta) e^{-\text i \kk\cdot\btheta} {{\rm d}}\theta. \]
			$\{T^{(N)}(f)\}_N$ is a GLT sequence with symbol $\kappa(\bfx,\btheta)=f(\btheta)$.
			\item Given an almost everywhere continuous function, $a:[0,1]^q\to\mathbb C$, its associated diagonal sampling sequence $\{D^{(N)}(a)\}_N$ is defined as
			\[ D^{(N)}(a) = \text{diag}\left(\left\{a\left(\frac \ii\nn\right) \right\}_{\ii=\bf 1}^\nn\right). \]
			$\{D^{(N)}(a)\}_N$  is a GLT sequence with symbol $\kappa(\bfx,\btheta)=a(\bfx)$.
			\item Any zero-distributed sequence $\{Z^{(N)}\}_N\svs0$ is a GLT sequence with symbol $\kappa(\bfx,\btheta)=0$ .
		\end{itemize}
		\item[\textbf{GLT 4.}]\label{GLT4} If $\{A^{(N)}\}_N\sim_{\rm GLT}\kappa$ and $\{B^{(N)}\}_N\sim_{\rm GLT}\xi$, then
		\begin{itemize}
			\item $\{(A^{(N)})^H\}_N\sim_{\rm GLT}\overline\kappa$, where $(A^{(N)})^H$ is the conjugate transpose of $A^{(N)}$,
			\item $\{\alpha A^{(N)}+\beta B^{(N)}\}_N\sim_{\rm GLT}\alpha\kappa+\beta\xi$ for all $\alpha,\beta\in\mathbb C$,
			\item $\{A^{(N)} B^{(N)}\}_N\sim_{\rm GLT}\kappa\xi$.
		\end{itemize}
		\item[\textbf{GLT 7.}]\label{GLT7} $\{A^{(N)}\}_N\sim_{\rm GLT}\kappa$ if and only if there exist GLT sequences $\{B_m^{(N)}\}_N\sim_{\rm GLT}\kappa_m$ such that $\kappa_m$ converges to $\kappa$ in measure and $\{B_m^{(N)}\}_N\xrightarrow{\text{a.c.s.}}\{A^{(N)}\}_N$ as $m\to\infty$.
	\end{enumerate}

\subsection{GLT symbol of Spherical IF and Convergence}

Each point of $S^2$ can be expressed in its spherical coordinates as
\[
r = (x_r,y_r,z_r) = (\theta_r,\vf_r), 	\qquad  x_r = \cos  \theta_r\cos  \vf_r, \quad  y_r = \sin  \theta_r\cos  \vf_r, \quad  z_r = \sin  \vf_r,
\]
where $\theta\in [0,2\pi)$ and $\vf\in [-\pi/2,\pi/2]$. In this case, the differential of the normalized area measure is
\begin{equation}
    \label{eq:solid_angle_measure_polar}d\sigma =\cos(\vf) d\theta d\vf/(4\pi).
\end{equation}
We can thus define a regular grid on $[0,2\pi)\times [-\pi/2,\pi/2]$ with parameter $N$ as
\begin{equation}
    \label{eq:center_points_grid_in_polar}
    z_{i,j} =\left(  (2i-1)h , -\pi/2 + (j-1/2)h \right), \quad i,j=1,2,\dots,N
, \qquad
h = \frac \pi  N \ll 1
\end{equation}
where the $z_{i,j}$ can be seen as the center of the rectangles
\begin{equation}
    \label{eq:rectangles_in_polar}
    S_{i,j} = [2(i-1)h,2ih] \times [-\pi/2 + (j-1)h ,-\pi/2 + jh].
\end{equation}

Moreover, let $d(r,w):= dist(r,w)= \arccos(\ip{r,w})$ be the arc-length distance (called also great-circle, orthodomic or spherical distance). It can be expressed in spherical coordinates as
and can be expressed as
\begin{equation}
    \label{eq:arc-length_in_polar}d(r,w) = \arccos(\sin \vf_r\sin \vf_w + \cos\vf_r\cos\vf_w\cos(\theta_r-\theta_w)).
\end{equation}

\subsubsection{Area and Diameter of $S_{i,j}$}

From classical Taylor expansion of trigonometrical functions, we get 
\begin{align}
  \label{eq:sin_taylor}  \sin(a+\ve) &=  \sin(a) + \ve\cos(a)- \frac {\ve^2}2 \sin(a)  - \frac {\ve^3}6 \cos(a) + \frac {\ve^4}{24} \sin(a) +
 O(\ve^5),\\
\label{eq:cos_taylor}  \cos(a+\ve) &=  \cos(a) - \ve\sin(a)- \frac {\ve^2}2 \cos(a) + O(\ve^3),\\
\label{eq:arccos_taylor}  \arccos(1 - \ve) &= \sqrt{2\ve} + O(\sqrt{\ve^3}), \qquad \sqrt {a+\varepsilon} = \sqrt a + \frac 1{2\sqrt a} O(\varepsilon).
\end{align}
Moreover, all the $O(	\ve^\alpha)$ terms above are actually bounded in absolute value by $c\ve^\alpha$ where $c$ is an absolute constant not depending on $\ve,\alpha$ or $a$. From now on the same will hold true for all the $O(\cdot)$ terms in this document.

\paragraph{Area:}

Let us now compute the area and diameter on $S^2$ for each $S_{i,j}$. Using the polar form of the uniform measure on $S^2$ as in \eqref{eq:solid_angle_measure_polar} we obtain
\begin{align*}
\sigma(S_{i,j})
 &= \int \chi_{S_{i,j}}(r) d\sigma(r)
= \frac 1{4\pi}\int_{-\pi/2}^{\pi/2} \int_0^{2\pi} \chi_{S_{i,j}}(r) \cos(\vf_r) d\theta_r d\vf_r
\end{align*}
and by the definition of $S_{i,j}$ in \eqref{eq:rectangles_in_polar},
\begin{align*}
\sigma(S_{i,j})
&= \frac 1{4\pi}
\int_{-\pi/2 + (j-1)h}^{-\pi/2 + jh} \int_{2(i-1)h}^{2ih}  \cos(\vf_r) d\theta_r d\vf_r\\
&=\frac h{2\pi}
\left( \cos( (j-1)h)-  \cos( jh)  \right).
\end{align*}
Eventually, exploiting the Taylor expansion of the cosine function \eqref{eq:cos_taylor},
\begin{equation}
    \label{eq:area_rectangles}\sigma(S_{i,j})
 = \frac {h^2}{2\pi}\sin(jh) + O(h^3).
\end{equation}

\paragraph{Diameter:}

The couple of most distant points within $S_{i,j}$ is $(v_{i,j},u_{i,j})$, the opposite corners of the rectangle, so its diameter is the arccosine of $d_{i,j} = \ip{v_{i,j},u_{i,j}}$, where, in polar coordinates,
\[
v_{i,j} = (2(i-1)h,-\pi/2 + (j-1)h), \quad
u_{i,j} = (2ih,-\pi/2 + jh)
\]
and by \eqref{eq:arc-length_in_polar},
\begin{align*}
d_{i,j} &= \cos ((j-1)h)\cos (jh) + \sin((j-1)h)\sin(jh)\cos(2h).
\end{align*}
Expanding both sine and cosine as in \eqref{eq:sin_taylor} and \eqref{eq:cos_taylor} respectively, we get
\begin{align*}
d_{i,j}
&= \Big[\cos(jh) +h\sin(jh)- \frac {h^2}2 \cos(jh)\Big]\cos (jh) \\
& \quad + \Big[\sin(jh)  - h\cos(jh)- \frac {h^2}2 \sin(jh)\Big]\sin(jh) \Big[1 - {2h^2}  \Big]
+ O(h^3)\\
&= 1 - \frac {h^2}2    -
{2h^2}  \sin(jh)^2
+ O(h^3)
\end{align*}
and using \eqref{eq:arccos_taylor}, we get the following formula for the diameter
\begin{align}\label{eq:diameter_of_rectangle}
\nonumber\textnormal{diam}(S_{i,j}) &=  \arccos (d_{i,j}) = \arccos \left(1 - \frac {h^2}2 - {2h^2}  \sin(jh)^2+ O(h^3)\right)\\
&= h\sqrt{ 1 + {4}  \sin(jh)^2 } +  O(h^2).
\end{align}

\subsubsection{Fixed Diagonal}

We now fix integers $(t,s)$ with $0\le |t|,|s|\le N$ to estimate the distance between $z_{i,j}$ and $z_{i-t,j-s}$ for $i,j,i-t,j-s$ positive integers less than $N$. We will only compute the first nonzero order of the distance with respect to $h$ in the limit $N\to \infty$ or equivalently $h\to 0$. First of all, it is easy to see that $d_{j,t,s}:=\ip{z_{i,j},z_{i-t,j-s}}$ does not depend on $i$. From \eqref{eq:center_points_grid_in_polar}, we get
\[
z_{i,j} = \left(  (2i-1)h , -\pi/2 + (j-1/2)h \right),
\quad
z_{i-t,j-s} =
\left(  (2i-2t-1)h , -\pi/2 + (j-s-1/2)h \right)
\]
so we can write down the dot product of the two points following \eqref{eq:arc-length_in_polar} as
\begin{align*}
d_{j,t,s} &= \cos ( (j-1/2)h )\cos ((j-s-1/2)h) + \sin( (j-1/2)h )\sin((j-s-1/2)h)\cos(2th).
\end{align*}
Using classical trigonometric identities, we can rewrite it as
\begin{align}\label{eq:distance_interm1}
\nonumber d_{j,t,s} &= \cos ( (j-1/2)h )\cos ((j-s-1/2)h) + \sin( (j-1/2)h )\sin((j-s-1/2)h)\cos(2th) \\
\nonumber&= \frac 12 \Big[  \cos(sh) + \cos ( (2j-s-1)h )  \Big]
 +\frac 12 \Big[  \cos(sh) - \cos ( (2j-s-1)h )  \Big]\cos(2th) \\
 &=  \cos(sh)
 -\frac 12 \Big[  \cos(sh) - \cos ( (2j-s-1)h )  \Big]\Big[ 1- \cos(2th)\Big].
\end{align}
Expanding the trigonometric functions following  \eqref{eq:sin_taylor}, \eqref{eq:cos_taylor}, we see that
\begin{align}\label{eq:distance_interm}
\nonumber d_{j,t,s} &=  \cos(sh)
 -\frac 12 \Big[  \cos(sh) - \cos ( (2j-s-1)h )  \Big]\Big[ 1- \cos(2th)\Big]\\\nonumber
 &=  1 - \frac 12 s^2h^2
 -\frac 12 \Big[  1 -   \cos ( 2jh ) +\sin(2jh)(s+1)h   \Big]\Big[ 2t^2h^2\Big]\\\nonumber
&\quad + s^4O(h^4) + t^4O(h^4) + s^2t^2O(h^4) + (s+1)^2t^2O(h^4)\\\nonumber
 &=  1 - \frac 12 h^2 \Big[  s^2 + 4t^2\sin(jh)^2 +2t^2(s+1)\sin(2jh)h  \Big]\\
&\quad + s^2\Big[ s^2+t^2\Big] O(h^4) + t^4O(h^4) + (s+1)^2t^2O(h^4).
\end{align}
Here we need to distinguish the cases in which $s=0$ from the cases where $|s|>0$.

\paragraph{Case $s=0$:}

Consider first the case $s=0$ and recall that $1\le i,j\le N$. From \eqref{eq:distance_interm1},
\begin{align*}
d_{j,t,0} &= 1  -\frac 12 \Big[  1 - \cos ( (2j-1)h )  \Big]\Big[ 1- \cos(2th)\Big].
\end{align*}
Notice that if we reflect with respect to the equator, i.e. we substitute $j$ with $N+1-j$, the quantity does not change since
\[\cos ( (2j-1)h ) = \cos ( 2\pi + (1-2j)h ) = \cos ( (2(N+1-j)-1)h ),\]
so from now on we suppose  $j\le N/2$ that implies, by concavity of the $\sin$ function on $[0,\pi/2]$, $\sin(jh)\ge 2 jh/\pi \ge 2h/\pi$.  By  \eqref{eq:distance_interm},
\begin{align*}
d_{j,t,0} &=  1 - \frac 12 h^2 \Big[   4t^2\sin(jh)^2 +2t^2\sin(2jh)h  \Big] + t^4O(h^4).
\end{align*}
Using \eqref{eq:arccos_taylor},
\begin{align}\label{eq:distance_ts_with_s_zero}\nonumber
d(z_{i,j},z_{i-t,j}) &=  \arccos (d_{j,t,0}) = \arccos \left(
1 -2t^2h^2 \sin(jh)^2 + t^2h^3 \sin(2jh) +  t^4O(h^4)
\right)\\\nonumber
&= \sqrt{4t^2h^2 \sin(jh)^2 - 2t^2h^3 \sin(2jh)+  t^4O(h^4) }
+ t^3O(h^3)
\\\nonumber
&= 2|t|h\sin(jh)
+
\frac{t^2\sin(2jh)O(h^3)  +  t^4O(h^4) }{4th\sin(jh)}
+ t^3O(h^3)
\\
&= 2|t|h\sin(jh)
+
tO(h^2)
+
\frac{  t^3O(h^3)}{2h/\pi}
+
t^3O(h^3)
= 2|t|h\sin(jh)
+
t^3O(h^2).
\end{align}
If we now consider the indices $j> N/2$, we find that
\begin{align*}
\nonumber d(z_{i,j},z_{i-t,j}) &=
d(z_{i,N+1-j},z_{i-t,N+1- j})= 2|t|h\sin((N+1-j)h)+ t^3O(h^2)\\
 &= 2|t|h\sin((j-1)h)+ t^3O(h^2)= 2|t|h\sin(jh)+ t^3O(h^2)
\end{align*}
so \eqref{eq:distance_ts_with_s_zero} holds for any $j$. Notice moreover that if $t$ is zero, then the above formula correctly reports that the distance is zero.

\paragraph{Case $s\ne 0$:}

Suppose now that $s\ne 0$ and in particular $|s|\ge 1$.
From \eqref{eq:distance_interm}
\begin{align*}
d_{j,t,s}
&=  1 - \frac 12 h^2 \Big[  s^2 + 4t^2\sin(jh)^2 +2t^2(s+1)\sin(2jh)h  \Big]\\
&\quad + s^2\Big[ s^2+t^2\Big] O(h^4) + t^4O(h^4) + (s+1)^2t^2O(h^4)\\
&=  1 - \frac 12 h^2 \Big[  s^2 + 4t^2\sin(jh)^2\Big] +t^2(s+1)O(h^3)  \\
&\quad + s^2\Big[ s^2+t^2\Big] O(h^4) + t^4O(h^4) + (s+1)^2t^2O(h^4).
\end{align*}
Since $|s|,|t|\le N$ and $h=\pi/N$, then $tO(h)$, $sO(h)$  and $(s+1)O(h)$ are all $O(1)$, so
\[
t^2(s+1)O(h^3)  + s^2\Big[ s^2+t^2\Big] O(h^4) + t^4O(h^4) + (s+1)^2t^2O(h^4)
= (|s|+|t|+1)^3 O(h^3)
\]
and thus
\begin{align*}
d_{j,t,s}
&=  \frac{h^2}2 \left[ s^2
  + 4t^2   \sin(jh)^2\right]
+(|s|+|t|+1)^3 O(h^3)
\\&= (s^2+t^2)O(h^2) +(|s|+|t|+1)^3 O(h^3) = (|s|+|t|+1)^2 O(h^2).
\end{align*}
Using now that $s\ne 0$ we see that $\left[ s^2
  + 4t^2   \sin(jh)^2\right]^{-1}\le 1/|s|\le 1$, so
\begin{align}\label{eq:distance_ts_with_s_non_zero}
\nonumber d(z_{i,j},z_{i-t,j-s}) &=  \arccos \left(1 -
\frac{h^2}2 \left[ s^2
  + 4t^2   \sin(jh)^2\right]
+ (|s|+|t|+1)^3 O(h^3)\right)\\\nonumber
&= h\sqrt{  \left[ s^2
  + 4t^2   \sin(jh)^2\right]
+ (|s|+|t|+1)^3 O(h)} +
(|s|+|t|+1)^3 O(h^3) \\\nonumber
&= h\sqrt{ s^2
  + 4t^2   \sin(jh)^2}
  + \frac{(|s|+|t|+1)^3}{\sqrt{ s^2  + 4t^2   \sin(jh)^2}}  O(h^2)
+  (|s|+|t|+1)^3 O(h^3)\\
&= h\sqrt{ s^2
  + 4t^2   \sin(jh)^2}
  + (|s|+|t|+1)^2 O(h^2).
\end{align}
Notice that we can combine  \eqref{eq:distance_ts_with_s_non_zero} and \eqref{eq:distance_ts_with_s_zero} into one formula that holds for any $s,t,j$, i.e.
\begin{align}\label{eq:distance_ts}
 d(z_{i,j},z_{i-t,j-s}) = h\sqrt{ s^2
  + 4t^2   \sin(jh)^2}
  + (|s|+|t|+1)^3 O(h^2).
\end{align}

\subsubsection{Normalization and Support of the Filter}

Let the filters $f_r(w)$ be defined as
\begin{equation*}
   f_r(w) := \frac 1C(R-\arccos(\ip{r,w}))^+ =\frac 1C(R-d(r,w))^+
\end{equation*}
where $r, w$ are points on $S^2$ and $(a)^+ := \max(0,a)$. $C$ is a constant meant to normalize the function $f_z(w)$ with respect to the measure $\sigma$. $R>0$ is the radius of the function, and it's supposed to be less than $\pi$, so that the support of $f_r$ is not the whole $S^2$.

\paragraph{Norm of the Filter:}

Since the norm of $f_r$ does not depend on the point $r$, one can suppose that $r=(0,\pi/2)$ are its polar coordinates, so its norm will be
\begin{align*}
\|f_r\| = 1 &= \frac 1C\int  (R-d(r,w))^+ d\sigma(w)
= \frac 1{4\pi C}\int_{-\pi/2}^{\pi/2} \int_0^{2\pi}  (R-d(r,w))^+ \cos(\vf_w) d\theta_w d\vf_w\\
&= \frac 1{4\pi C}\int_{-\pi/2}^{\pi/2} \int_0^{2\pi} (R-\pi/2 + \vf_w )^+ \cos(\vf_w) d\theta_w d\vf_w= \frac {1}{2C}\int_{\pi/2-R}^{\pi/2} (R-\pi/2 + \vf_w  )\cos(\vf_w)  d\vf_w\\
&= \frac {1}{2C}
\left[
[ (R-\pi/2 + \vf_w  )\sin(\vf_w)     ]_{\pi/2-R}^{\pi/2}
-
\int_{\pi/2-R}^{\pi/2} \sin(\vf_w)  d\vf_w
\right] = \frac 1{2C}
\left[ R  - \sin(R)   \right].
\end{align*}
This proves that $C = (R-\sin(R))/2$ and thus the final expression of the filter function is
\begin{equation}
    \label{eq:filter_definition}f_r(w)  =2\frac {(R-d(r,w))^+}{R-\sin(R)}.
\end{equation}

\paragraph{Support of the Filter:}

Notice that $f_r(w)\ne 0$ if and only if $d(r,w) < R$. Since $\arccos(x)$ is a strictly decreasing function on $[-1,1]$, then it is equivalent to
$$
\sin \vf_r\sin \vf_w + \cos\vf_r\cos\vf_w\cos(\theta_r-\theta_w) > \cos(R)\ge 1-R^2/2.
$$
As a consequence $f_r(w)\ne 0$ implies that
\begin{align}\label{eq:condition_for_points_in_support}\nonumber
\frac{R^2}2&> 1-\sin \vf_r\sin \vf_w - \cos\vf_r\cos\vf_w\cos(\theta_r-\theta_w) \\
\nonumber
&= 1- \cos(\Delta_\vf) + \cos\vf_r\cos\vf_w[1-\cos(\Delta_\theta)] \\
&\ge \frac 15 [\Delta_\vf^2 + \cos\vf_r\cos\vf_w\Delta_\theta^2]
\end{align}
where $\Delta_\vf = |\vf_r-\vf_w|$ and $\Delta_\theta = \min\{|\theta_r-\theta_w|,2\pi - |\theta_r-\theta_w|\}$ are both in $[0,\pi]$ and $1-\cos(x) \ge x^2/5$ for $x\in[0,\pi]$.
Suppose now that $R=mh$ with $m$ being an absolute constant. We take $r=z_{i,j}$ and $w=z_{i-t,j-s}$ with $i,j,i-t,j-s$ integers between $1$ and $N$, that in particular imply $|t|,|s|\le N-1$.
Call $\wt j := j-1/2\ge 1/2$ and assume that $f_r(w)\ne 0$.  From \eqref{eq:condition_for_points_in_support}, and \eqref{eq:center_points_grid_in_polar},
\begin{align*}
5R^2 = 5m^2h^2 &> 2 [\Delta_\vf^2 + \cos\vf_r\cos\vf_w\Delta_\theta^2]\\
&> 2 [s^2h^2 + \sin(\tilde jh) \sin((\tilde j-s)h)\min\{2|t|h,2\pi-2|t|h\}^2]\\
\implies 5m^2
&> 2 [s^2 + 4\sin(\tilde jh) \sin((\tilde j-s)h)\min\{|t|,N-|t|\}^2]\\
\implies 3m^2
&>  s^2 + 4\sin(\tilde jh) \sin((\tilde j-s)h)\min\{|t|,N-|t|\}^2
\end{align*}
Notice that in this case $|s|<\sqrt 3m$, i.e. $f_{z_{i,j}}(z_{i-t,j-s})$ is non-zero at most for a finite number of different values of the integer $s$. Since $h=\pi/N\le \pi$,
\begin{align*}
& \frac {\sin(x - h/2)}{\sin(x)} \ge \cos(h/2) - \sin(h/2) \tan(h)^{-1}= \frac 1{2\cos(h/2)} \ge \frac 12
 & \forall\,\, \pi \ge x\ge h
\end{align*}
so we find that
\begin{equation}\label{eq:nonzero_elem}
3m^2 >  s^2 + \sin(  jh )\sin(  (j-s)h )  \min\{|t|,N-|t|\}^2.
\end{equation}

\subsubsection{The Moving average and its Symbol Candidate}

Let us now consider the matrix $B$ as defined in \eqref{eq:sifting_matrix}, and substitute the rectangles $S_{i,j}$ defined in \eqref{eq:rectangles_in_polar}, whose centers are the points $z_{i,j}$, to obtain
\begin{equation}\label{eq:B_matrix}
    B_{(i,j),(p,q)}= \frac{1}{\sigma(S_{i,j})} \int_{S_{i,j}}\int_{S_{p,q}}f_w(r)d\sigma(r)d\sigma(w).
\end{equation}
The filter in \eqref{eq:filter_definition}  is continuous and symmetric in $w,r$, so for $h\to 0$ one has that $f_w(r)\approx f_{z_{p,q}}(z_{i,j})$ when $r\in S_{i,j}$ and $w\in S_{p,q}$, therefore
\[
B_{(i,j),(p,q)}\approx \sigma(S_{p,q})f_{z_{i,j}}(z_{p,q}).
\]
We can thus study the $N^2\times N^2$ matrix called $Op^{(N)}$ whose $(i,j),(p,q)$ element is
\begin{equation}
    \label{eq:sifting_matrix_op}Op^{(N)}_{(i,j),(p,q)} = \sigma(S_{p,q}) f_{z_{i,j}}(z_{p,q}) = 2\sigma(S_{p,q})\frac {(R-d(r,w))^+}{R-\sin(R)}.
\end{equation}
We will often omit the superscript ${(N)}$ in the following.

\begin{remark}
One could approximate the moving average of the signal more precisely as
\[
 \int_{S^2} s(w) f_z(w)d\sigma(w) \sim \sum_{p,q}  s(z_{p,q}) \int_{S_{p,q}} f_z(w)d\sigma(w),
\]
but it is immediate to see that
\[
\left| \sigma(S_{p,q}) f_z(z_{p,q}) -\int_{S_{p,q}} f_z(w)d\sigma(w)\right| \le \sigma(S_{p,q})\,\textnormal{diam}(S_{p,q}) = O(h^3)
\]
and it is zero whenever $\textnormal{dist}(z,S_{p,q})\ge R$.
\end{remark}

First we show that the matrix sequence $\{Op^{(N)}\}_N$ is zero-distributed as in \eqref{eq:zero_distributed_sequences_definition} and all its eigenvalues are strongly clustered at zero (see \eqref{eq:stong_clustering_at_zero}).

\begin{lemma}
If the radius $R$ of the filter does not depend on $N$, then the sequence $\{Op^{(N)}\}_N$ is zero-distributed and shows a strong cluster at zero both for the singular values and the eigenvalues. In particular its spectral symbol is the zero function.
\end{lemma}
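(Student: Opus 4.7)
The natural route is to apply property Z2 with $p=2$: if I can show that $\|Op^{(N)}\|_F = O(1)$ as $N\to\infty$, then $N^{-1}\|Op^{(N)}\|_2 \to 0$, so Z2 immediately yields $\{Op^{(N)}\}_N \svs 0$. Every other claim in the lemma will follow from the same Frobenius bound via soft arguments, so the only concrete computation required is this Frobenius estimate.

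For that bound I would use only two elementary facts: from \eqref{eq:filter_definition}, $f_{z_{i,j}}(z_{p,q}) \le M := 2R/(R-\sin R)$, a constant independent of $N$ since $R$ is fixed; and from \eqref{eq:area_rectangles}, $\sigma(S_{p,q}) \le C h^2$ with $h = \pi/N$, while $\sum_{(p,q)}\sigma(S_{p,q}) = 1$ because the cells partition $S^2$ under the normalized measure $\sigma$. Combining these, using in particular $\sigma(S_{p,q})^2 \le Ch^2\cdot\sigma(S_{p,q})$, gives
\[
\|Op^{(N)}\|_F^2 = \sum_{(i,j),(p,q)} \sigma(S_{p,q})^2 \, f_{z_{i,j}}(z_{p,q})^2 \;\le\; M^2 C h^2 \sum_{(i,j)}\sum_{(p,q)} \sigma(S_{p,q}) \;=\; M^2 C h^2 N^2 \;=\; O(1).
\]

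Given this Frobenius bound, everything else drops out. Zero-distribution in the singular-value sense, and hence that the singular-value symbol is the zero function, is precisely Z2 with $p=2$. The number of singular values exceeding $\varepsilon>0$ is at most $\|Op^{(N)}\|_F^2/\varepsilon^2 = O(1)$, which is strong clustering at zero for the singular values in the sense of \eqref{eq:stong_clustering_at_zero}. For the eigenvalues I would invoke the Schur inequality $\sum_{i=1}^{N^2}|\lambda_i(Op^{(N)})|^2 \le \|Op^{(N)}\|_F^2 = O(1)$, which simultaneously yields strong clustering of the eigenvalues at zero and the spectral symbol identity $\{Op^{(N)}\}_N \es 0$: splitting the sum in \eqref{sd} over $|\lambda_i|\le\varepsilon$ and $|\lambda_i|>\varepsilon$, the first part tends to $F(0)$ uniformly as $\varepsilon\to 0$ by continuity of $F$, and the second is bounded by $2\|F\|_\infty\cdot O(\varepsilon^{-2})/N^2 \to 0$ for any fixed $\varepsilon$.

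The main obstacle is effectively nil: none of the geometric content of the previous subsections, such as the precise distance estimate \eqref{eq:distance_ts} or the support restriction \eqref{eq:nonzero_elem}, enters at this stage, because the proof hinges only on the pointwise boundedness of the filter and on the normalization of the spherical measure. Those finer estimates will presumably become essential only later, when one seeks a \emph{non-trivial} GLT symbol, for instance after letting the filter radius $R$ shrink with $h$ at an appropriate rate.
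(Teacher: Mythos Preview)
Your proof is correct and follows essentially the same route as the paper: bound the filter pointwise by $R/C$, obtain $\|Op^{(N)}\|_F^2=O(1)$, and invoke \textbf{Z\,2} with $p=2$. The only difference is cosmetic: where the paper cites an external reference for the strong clustering and appeals to \textbf{GLT\,2} (via \textbf{GLT\,3}) for the spectral symbol, you supply the direct Chebyshev-type count and the Schur inequality yourself, which is a perfectly valid and slightly more self-contained substitute.
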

\begin{proof}
Notice that from the definition of the filter \eqref{eq:filter_definition}, $|f_z(w)|\le R/C$ for any $z,w$, so the general element of the matrix is bounded by
\[
\left| Op^{(N)}_{(i,j),(p,q)}\right| \le \frac{1}{2\pi C}Rh^2\sin(qh) + O(h^3).
\]
As a consequence, $\|Op^{(N)}\|_F^2 = O(1)$ and this is sufficient to conclude that the sequence is zero distributed (by \hyperref[Z2]{\textbf{Z\,2}}) and shows a strong cluster at zero both for the singular values and the eigenvalues (See Theorems 2, 3 from \cite{al2014singular}). Its spectral symbol can be proved being the zero function by  \hyperref[GLT2]{\textbf{GLT 2}}.
\end{proof}

\noindent To get a better understanding of the spectral properties of the matrix, we have to suppose that $RN$ is an absolute constant for any $N$, meaning that $R$ depends on $N$.
 Once this assumption is made, we can fix a 2-level diagonal $(t,s)$ in $Op$, and focus on the set of entries $Op_{(i,j),(p,q)}$ for which $i-p=t$ and $j-q = s$. Our aim is to find a scalar function $a_{t,s}:[0,1]^2\to \f C$ such that $a_{t,s} (ih/\pi,jh/\pi)$ converges to $Op_{(i,j),(i-t,j-s)}$ uniformly in $i,j$.

\begin{lemma}
\label{lem:diagonal_functions_a_ts}  Suppose $R=mh$ with $m$ constant.  For any integer $t,s$ let
\begin{equation}
    \label{eq:ats}a_{t,s}(x_1,x_2) := 6 \sin(\pi x_2)  \frac{(m-
 \sqrt{ s^2
  + 4t^2   \sin(\pi x_2)^2}
)^+}{m^3\pi}.
\end{equation}
Then for fixed $t,s$
\[
\max_{i,j : 1\le i,j,i-t,j-s\le N} \left|a_{t,s} \left(\frac{ih}\pi,\frac{jh}\pi\right) -  Op_{(i,j),(i-t,j-s)}^{(N)}\right| = (|t|+1)^3O(h)\to 0.
\]
\end{lemma}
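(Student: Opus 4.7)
The plan is to plug in the explicit expressions for the three factors appearing in
\[
Op^{(N)}_{(i,j),(i-t,j-s)} = \frac{2\,\sigma(S_{i-t,j-s})(R - d(z_{i,j},z_{i-t,j-s}))^+}{R - \sin(R)},
\]
replacing $R = mh$, and then comparing the result with $a_{t,s}(ih/\pi,jh/\pi)$. Since $t,s$ are fixed but $i,j$ range over $O(N)$ values, the estimates must be uniform in $i,j$, and all $O(\cdot)$ terms coming from the cited Taylor expansions already enjoy this uniformity (they depend only on absolute constants).

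First, I compute the three ingredients. The normalizing denominator is immediate from the Taylor expansion of $\sin$: $R - \sin(R) = mh - \sin(mh) = m^3h^3/6 + O(h^5)$, hence
\[
\frac{2}{R-\sin(R)} = \frac{12}{m^3 h^3}\bigl(1 + O(h^2)\bigr).
\]
Next, by \eqref{eq:area_rectangles}, $\sigma(S_{i-t,j-s}) = \tfrac{h^2}{2\pi}\sin((j-s)h) + O(h^3)$, and since $s$ is fixed, $\sin((j-s)h) = \sin(jh) + O(h)$ uniformly in $j$, giving $\sigma(S_{i-t,j-s}) = \tfrac{h^2}{2\pi}\sin(jh) + O(h^3)$. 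Finally, by \eqref{eq:distance_ts},
\[
R - d(z_{i,j},z_{i-t,j-s}) = h\bigl(m - \sqrt{s^2 + 4t^2\sin(jh)^2}\bigr) + (|t|+1)^3 O(h^2),
\]
where I have absorbed the fixed constant $|s|+1$ into the absolute constants hidden in the $O$-symbol.

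The main technical point is the positive part $(\cdot)^+$: since $(\cdot)^+$ is $1$-Lipschitz,
\[
(R - d)^+ = h\bigl(m - \sqrt{s^2 + 4t^2\sin(jh)^2}\bigr)^+ + (|t|+1)^3 O(h^2)
\]
uniformly in $i,j$. (Away from the boundary of the support the two pieces agree in sign for $h$ small; on the boundary both sides are $O(h^2)(|t|+1)^3$.) Multiplying the area estimate by this quantity gives a numerator
\[
\frac{h^3}{\pi}\sin(jh)\bigl(m - \sqrt{s^2+4t^2\sin(jh)^2}\bigr)^+ + (|t|+1)^3 O(h^4),
\]
using that $\sin(jh) \le 1$ and that the positive part is bounded by $m$, so that the cross terms involving $O(h^3)$ from $\sigma$ contribute only $(|t|+1)^3 O(h^4)$.

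Combining with the factor $12/(m^3h^3)(1+O(h^2))$ from the denominator, the leading term becomes exactly $\tfrac{6}{m^3\pi}\sin(jh)\bigl(m - \sqrt{s^2+4t^2\sin(jh)^2}\bigr)^+ = a_{t,s}(ih/\pi,jh/\pi)$, and the error is $(|t|+1)^3 O(h)$, as claimed. I don't expect any real obstacle beyond bookkeeping of the error terms: the delicate part is the Lipschitz argument for $(\cdot)^+$, which ensures the estimate remains valid uniformly across the boundary $m = \sqrt{s^2 + 4t^2\sin(jh)^2}$ of the support, and the verification that the multiplicative structure of the error on the distance propagates correctly through the division by $R - \sin R$, whose lower order of magnitude $h^3$ is exactly cancelled by the $h^3$ in the numerator.
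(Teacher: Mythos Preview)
Your proof is correct and follows essentially the same route as the paper: expand $\sigma(S_{i-t,j-s})$, $R-\sin R$, and $d(z_{i,j},z_{i-t,j-s})$ to leading order, pass the error through the $1$-Lipschitz map $(\cdot)^+$, and collect terms. The only substantive difference is that the paper first disposes of the range $|s|\ge\sqrt 3\,m$ by invoking \eqref{eq:nonzero_elem} to see that both $Op^{(N)}_{(i,j),(i-t,j-s)}$ and $a_{t,s}(ih/\pi,jh/\pi)$ are exactly zero there, and only then restricts to $|s|<\sqrt 3\,m=O(1)$, so that $(|s|+|t|+1)^3=(|t|+1)^3O(1)$ with an \emph{absolute} constant. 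You instead absorb $|s|+1$ into the $O$-constant throughout; this is legitimate for the lemma as stated (with $t,s$ fixed), but under the paper's convention that all $O(\cdot)$ constants are absolute, your hidden constant really depends on $s$. For the later application in Theorem~\ref{thm:AN_has_right_symbol}, where one sums over $s$, the paper's case split is what makes the bound genuinely uniform.
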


\begin{proof}
    Let us thus expand each entry $Op_{(i,j),(i-t,j-s)}$ for $h\to 0$.
Substituting \eqref{eq:area_rectangles} into \eqref{eq:sifting_matrix_op}, we get
\begin{align*}
Op_{(i,j),(i-t,j-s)}  &=\sigma(S_{i-t,j-s}) f_{z_{i,j}}(z_{i-t,j-s})\\
&=  2\left[\frac{h^2}{2\pi}\sin((j-s)h) + O(h^3)\right] f_{z_{i,j}}(z_{i-t,j-s}).
\end{align*}
Recall that by \eqref{eq:nonzero_elem}, if $s\ge \sqrt 3 m$ then $f_{z_{i,j}}(z_{i-t,j-s})$, and consequentially $Op_{(i,j),(i-t,j-s)}$, is zero and in this case
\[
a_{t,s} \left(\frac{ih}\pi,\frac{jh}\pi\right) = 6 \sin(jh)  \frac{(m-
 \sqrt{ s^2
  + 4t^2   \sin(jh)^2}
)^+}{m^3\pi} = 0
\]
so we only have to check the case $|s|<\sqrt 3 m = O(1)$. By \eqref{eq:sin_taylor} and \eqref{eq:filter_definition} we thus find that
\[
Op_{(i,j),(i-t,j-s)}
= 2\left[\frac{h^2}{2\pi}\sin(jh) + O(h^3)\right] \frac{(R- d(z_{i,j},z_{i-t,j-s})  )^+}{R-\sin(R)}
\]
and substituting $R=mh$,
\begin{align*}
Op_{(i,j),(i-t,j-s)}
&=  2\left[\frac{h^2}{2\pi}\sin(jh) + O(h^3)\right] \frac{(mh- d(z_{i,j},z_{i-t,j-s}) )^+}{m^3h^3/6 + O(h^5)}
\\
&=  12\left[\frac{1}{2\pi}\sin(jh) + O(h)\right] \frac{(m- d(z_{i,j},z_{i-t,j-s})/h )^+}{m^3 + O(h^2)}.
\end{align*}
Here we use $1/(m^3 +O(h^2)) = 1/m^3 + O(h^2)$, and $(m- d(z_{i,j},z_{i-t,j-s})/h )^+\le m=O(1)$ to see that
 \begin{align}
Op_{(i,j),(i-t,j-s)}
&=\frac{6}{\pi}\sin(jh) \frac{(m- d(z_{i,j},z_{i-t,j-s})/h )^+}{m^3}  + O(h)
.\label{eq:entries_of_Op}
\end{align}
Substituting now the equation for the distance \eqref{eq:distance_ts}, and using that $g(x):=(x)^+$ is a Lipschitz function, we get
 \begin{align*}
Op_{(i,j),(i-t,j-s)}
&=6\sin(jh) \frac{(m- \sqrt{s^2 +4t^2\sin(jh)^2} + (|s|+|t|+1)^3O(h)  )^+}{\pi m^3}  + O(h)
\\
&=6\sin(jh) \frac{(m- \sqrt{s^2 +4t^2\sin(jh)^2})^+ }{\pi m^3}  + (|s| + |t|+1)^3O(h)
\end{align*}
where $ (|s| + |t|+1)^3 =  ( |t|+1)^3O(1)$ since we are in the case $s=O(1)$.
After substituting $jh/\pi=x_2$, we eventually find the wanted function
\begin{equation*}
   a_{t,s}(x_1,x_2) := 6 \sin(\pi x_2)  \frac{(m-
 \sqrt{ s^2
  + 4t^2   \sin(\pi x_2)^2}
)^+}{m^3\pi}.
\end{equation*}
\end{proof}

With the functions $a_{t,s}(\bm x)$ as defined in \eqref{eq:ats}, let
\begin{equation}
    \label{eq:symbol}\kappa(\bm x,\bm\theta) := \sum_{t,s\in \mathbb Z} a_{t,s}(\bm x) \exp(\textnormal i(t\theta_1 + s\theta_2))
\end{equation}
where the infinite sum converges punctually everywhere for $\bm x\in [0,1]^2$,
since for every $\bm x$ there are at most a finite number of non-zero $a_{t,s}(\bm x)$, and thus a finite number of addends.
In fact $a_{t,s}(\bm x)\ne 0$ implies $\sin(\pi x_2)\ne 0$, and consequently
\begin{equation}
    \label{eq:infinite_sum_ats_is_finite}m> \sqrt{ s^2
  + 4t^2   \sin(\pi x_2)^2}
  \implies
  \frac{m^2 }{ 4  \sin(\pi x_2)^2}>
  t^2, \quad m^2>s^2
\end{equation}
thus bounding both $t$ and $s$ to a finite number of integers.
In the next sections, we prove that $\kappa(\bm x,\bm \theta)$ in \eqref{eq:symbol} is actually the GLT and spectral symbol for the matrix-sequence $\{Op^{(N)}\}_N$ when $RN$ is constant.

\subsubsection{GLT Symbol}

\begin{theorem}
    \label{thm:AN_has_right_symbol} If $RN$ is constant for any $N$, and $Op$ is as in \eqref{eq:sifting_matrix_op}, then
\[
    \{Op^{(N)}\}_N \sim_{GLT}\kappa(\bm x,\bm \theta) =  \sum_{t,s\in \mathbb Z} a_{t,s}(\bm x) \exp(\textnormal i(t\theta_1 + s\theta_2))
    \]
\end{theorem}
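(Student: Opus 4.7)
The plan is a standard GLT identification via a.c.s.\ truncation. I would first construct, for each $(t,s)\in\mathbb Z^2$, the 2-level matrix-sequence
\[
A_{t,s}^{(N)}:=D^{(N)}(a_{t,s})\,T^{(N)}\!\bigl(e^{\i(t\theta_1+s\theta_2)}\bigr),
\]
which by \hyperref[GLT3]{\textbf{GLT 3}} and the sum/product rules of \hyperref[GLT4]{\textbf{GLT 4}} is GLT with symbol $a_{t,s}(\bm x)e^{\i(t\theta_1+s\theta_2)}$. Summing over $|t|,|s|\le M$ yields an explicit GLT sequence $\{B_M^{(N)}\}_N$ with symbol $\kappa_M(\bm x,\btheta):=\sum_{|t|,|s|\le M}a_{t,s}(\bm x)e^{\i(t\theta_1+s\theta_2)}$. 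The convergence $\kappa_M\to\kappa$ in measure is immediate: at any $\bm x$ with $\sin(\pi x_2)\ne 0$ the inequalities \eqref{eq:infinite_sum_ats_is_finite} confine the support of $(t,s)\mapsto a_{t,s}(\bm x)$ to a finite rectangle, so $\kappa_M(\bm x,\btheta)=\kappa(\bm x,\btheta)$ for all $M$ large enough, and the exceptional set $\{\sin(\pi x_2)=0\}$ has measure zero. What remains is to establish $\{B_M^{(N)}\}_N\acs\{Op^{(N)}\}_N$ as $M\to\infty$, after which \hyperref[GLT7]{\textbf{GLT 7}} concludes.

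To verify the a.c.s.\ convergence I would invoke \hyperref[ACS4]{\textbf{ACS\,4}} with $p=2$, estimating $\|Op^{(N)}-B_M^{(N)}\|_F^2$ by splitting along the 2-level diagonal $(t,s)=(i-p,j-q)$. On diagonals with $|t|,|s|\le M$, Lemma~\ref{lem:diagonal_functions_a_ts} gives entrywise error $(|t|+1)^3 O(h)$ with $h=\pi/N$, summing to $\sum_{|t|,|s|\le M} N^2((|t|+1)^3 h)^2 = O(M^8)$, which is $o(N^2)$ for each fixed $M$. Choosing $M\ge\sqrt 3\,m$ so the constraint $|s|\le\sqrt 3\,m$ coming from \eqref{eq:nonzero_elem} disposes of the $s$-direction, on diagonals with $|t|>M$ only $Op^{(N)}$ contributes: \eqref{eq:entries_of_Op} bounds each nonzero entry by $O(\sin(jh))$ while \eqref{eq:nonzero_elem} forces $\sin(jh)=O(1/|t|)$ and restricts $j$ to a polar band of $O(N/|t|)$ indices, giving $O(N^2/|t|^3)$ per diagonal and $O(N^2/M^2)$ after summing. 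Hence
\[
\frac{\|Op^{(N)}-B_M^{(N)}\|_F^2}{N^2}\le O\!\left(\frac{M^8}{N^2}\right)+O\!\left(\frac{1}{M^2}\right),
\]
so the function $\varepsilon(M,N)$ in \hyperref[ACS4]{\textbf{ACS\,4}} satisfies $\limsup_{N\to\infty}\varepsilon(M,N)=O(1/M)\to 0$.

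The hard part is exactly the control of the polar bands: near a pole the filter support covers $O(1/\sin(jh))$ longitudinal cells, so $Op^{(N)}$ is non-sparse on many diagonals simultaneously and no fixed truncation $M$ catches all the mass; only the interplay of the $\sin(jh)$ factor in the area element \eqref{eq:area_rectangles} with the quadratic support bound \eqref{eq:nonzero_elem} yields the summable $1/|t|^3$ decay used above. A secondary technicality is that $Op^{(N)}$ has wrap-around entries on diagonals $|t|\approx N$ (points near longitude $0$ are close to points near $2\pi$) that a Toeplitz shift $T^{(N)}$ does not reproduce; these can either be folded into $B_M^{(N)}$ by reindexing $t$ modulo $N$ (using the $N$-periodicity of $e^{\i t\theta_1}$ modulo zero-distributed corrections) or simply absorbed into the same a.c.s.\ tolerance via an identical Frobenius estimate, since the count and size of such entries obey the same polar-band bounds.
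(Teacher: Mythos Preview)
Your proposal is correct and follows the paper's approach almost verbatim: the approximants $B_M^{(N)}=\sum_{|t|,|s|\le M}D^{(N)}(a_{t,s})T^{(N)}(e^{\i(t\theta_1+s\theta_2)})$, the pointwise (hence in-measure) convergence $\kappa_M\to\kappa$ via \eqref{eq:infinite_sum_ats_is_finite}, the use of \hyperref[ACS4]{\textbf{ACS\,4}} with $p=2$, and the diagonal-by-diagonal split with Lemma~\ref{lem:diagonal_functions_a_ts} on the inner band and the support bound \eqref{eq:nonzero_elem} on the outer band are exactly what the paper does.

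The only organizational difference is in the outer-band estimate: the paper fixes $j$ first and sums over the admissible $t$ (picking up an extra $O(N\log N)$ from the $O(h)$ remainder in \eqref{eq:appr2}), whereas you fix $t$ first and sum over the admissible $j$, arriving directly at the cleaner $O(N^2/|t|^3)$ per-diagonal decay. Both orderings yield $\limsup_N N^{-2}\|Op^{(N)}-B_M^{(N)}\|_F^2=O(M^{-2})$. Your remark that the wrap-around diagonals $|t|\approx N$ obey the same polar-band bounds (with $N-|t|$ playing the role of $|t|$, and the factor $N-|t|$ from the count of valid $i$'s cancelling one power) is exactly how the paper handles them, so nothing is missing there either.
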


 \begin{proof}
Call $A^{(N)}_M$ the banded matrices
\[
A_M^{(N)}:=
 \sum_{|t|,|s|\le M} D^{(N)}(a_{t,s}(\bm x)) T^{(N)}(e^{\textnormal i(t\theta_1 + s\theta_2)})
\]
that are GLT sequences with symbols $\{A^{(N)}_M\}_N\GLT \kappa_M(\bm x,\bm\theta):=
\sum_{|t|,|s|\le M} a_{t,s}(\bm x)e^{\textnormal i(t\theta_1 + s\theta_2)}$ due to \hyperref[GLT3]{\textbf{GLT\,3}} and \hyperref[GLT4]{\textbf{GLT\,4}}. In
\eqref{eq:infinite_sum_ats_is_finite} we have already proved that $\kappa_M(\bm x,\bm\theta)$ converge everywhere to $\kappa(\bm x,\bm\theta)$, so we only need to show that
$\{A^{(N)}_M\}_N\acs \{Op^{(N)}\}_N$
since by \hyperref[GLT7]{\textbf{GLT\,7}} it would automatically prove the thesis.

Recall that by \eqref{eq:ats}
\[a_{t,s}(x_1,x_2) = 6 \sin(\pi x_2)  \frac{(m-
 \sqrt{ s^2
  + 4t^2   \sin(\pi x_2)^2}
)^+}{m^3\pi}\]
and by Lemma \ref{lem:diagonal_functions_a_ts}, \eqref{eq:nonzero_elem} and \eqref{eq:entries_of_Op},
\begin{align}
   \label{eq:appr1} Op_{(i,j),(i-t,j-s)}^{(N)} &= 6\sin(jh) \frac{(m- \sqrt{s^2 +4t^2\sin(jh)^2})^+ }{\pi m^3}  + (|t|+1)^3O(h)\\
    \label{eq:appr2}&=6\sin(jh) \frac{(m- d(z_{i,j},z_{i-t,j-s})/h )^+}{\pi m^3}  + O(h)
\end{align}
holds for any $i,j,t,s$ such that $1\le i,i-t,j,j-s\le N$,  $|t|<N$,  $|s|<\sqrt 3 m$, otherwise it is zero.
Suppose from now on that $\max\{1,\sqrt 3 m\}<M \le N$, so that we can write down the entries of $A^{(N)}_M$ as
\begin{equation}
    \label{eq:entries_of_ANM}
    [A_M^{(N)}]_{(i,j),(i-t,j-s)} =\begin{cases}
    a_{t,s}(ih/\pi,jh/\pi) = 6 \sin(jh)  \frac{(m-
 \sqrt{ s^2
  + 4t^2   \sin(jh)^2}
)^+}{m^3\pi},& |s|,|t|\le M,\\
0, & \text{otherwise.}
\end{cases}
\end{equation}
We now estimate the difference between $A_M^{(N)}$ and $Op^{(N)}$ as
\begin{align*}
    \|Op^{(N)} - A_M^{(N)}\|_F^2 &=
    \sum_{|t|,|s|\le M} \sum_{ \substack{i,j:\\1\le i,i-t,j,j-s\le N} }
    \left|Op^{(N)}_{(i,j),(i-t,j-s)} - [A_M^{(N)}]_{(i,j),(i-t,j-s)}\right|^2
    \\
    +& \sum_{N>|t|>M}\sum_{|s|\le \sqrt 3 m} \sum_{ \substack{i,j:\\1\le i,i-t,j,j-s\le N} }
    \left|Op^{(N)}_{(i,j),(i-t,j-s)}\right|^2
\end{align*}
where the second multi-sum has no elements with $|s|>M>\sqrt 3 m$ because that would lead to $Op^{(N)}_{(i,j),(i-t,j-s)}=0$. The first term is easy to bound thanks to \eqref{eq:appr1} and \eqref{eq:entries_of_ANM}.
\begin{align*}
    \sum_{|t|,|s|\le M} \sum_{ \substack{i,j:\\1\le i,i-t,j,j-s\le N} }
    \left|Op^{(N)}_{(i,j),(i-t,j-s)} - [A_M^{(N)}]_{(i,j),(i-t,j-s)}\right|^2
   = \sum_{|t|,|s|\le M} \sum_{ \substack{i,j:\\1\le i,i-t,j,j-s\le N} }
   (|t|+1)^6O(h^2) = O(1).
\end{align*}
For the second term, we notice that the expression in \eqref{eq:appr2} does not depend on $i$, since $d(z_{i,j},z_{i-t,j-s})=d_{j,t,s}$ as in \eqref{eq:distance_interm1}, and that for a fixed $t$, there are at most $n-|t|$ indices $i$ such that $1\le i, i-t\le N$, so
\begin{align*}
    &\sum_{N>|t|>M}\sum_{|s|\le M} \sum_{ \substack{i,j:\\1\le i,i-t,j,j-s\le N} }
    \left|Op^{(N)}_{(i,j),(i-t,j-s)}\right|^2 \\
    &= \sum_{N>|t|>M}\sum_{|s|\le M} \sum_{ \substack{i,j:\\1\le i,i-t,j,j-s\le N} }
  \frac{ 36\sin(jh)^2}{\pi^2 m^6} \left[(m- d(z_{i,j},z_{i-t,j-s})/h )^+\right]^2  + O(h)\\
  &\le \sum_{N>|t|>M}\sum_{|s|\le M} \sum_{ \substack{j:\\1\le j,j-s\le N} }
  (N-|t|)\frac{ 36\sin(jh)^2}{\pi^2 m^4}  + O(1)
  = \sum_{N>|t|>M} \sum_{ \substack{1\le j\le N} }
  (N-|t|)\frac{ 36(2M+1)\sin(jh)^2}{\pi^2 m^4}  + O(1)
\end{align*}
Now we split the sum in $j$ depending whether $\min\{j,N-j\}\le 2\sqrt 3 m$ or not. If it holds, then
\[\sin(jh) = \sin((N-j)h) \le 2\sqrt 3 mh = O(h) \]
and
\[
\sum_{N>|t|>M} \sum_{ j:\min\{j,N-j\}\le 2\sqrt 3 m }
  (N-|t|)\frac{ 36(2M+1)\sin(jh)^2}{\pi^2 m^4}  + O(1)
  =
  \sum_{N>|t|>M} \sum_{ j:\min\{j,N-j\}\le 2\sqrt 3 m }
   O(1) =O(N)
\]
Otherwise, we have $2\sqrt 3 m<j<N-2\sqrt 3 m$ and in particular,  $N>4\sqrt 3 m$. Keeping in mind that   $|s|\le \sqrt 3 m$, then
\begin{align*}
    \frac{\sin((j-s)h)}{\sin(jh)}& = \cos(sh)-\sin(sh)\tan(jh)^{-1} \ge
    \begin{cases}
        \cos(sh)-\sin(sh)\tan(2\sqrt 3 mh)^{-1}&s\ge 0\\
        \cos(sh)-\sin(sh)\tan(-2\sqrt 3 mh)^{-1}& s<0
    \end{cases}\\
    &=    \frac{ \sin((2\sqrt 3m -|s|)h)}{\sin(2\sqrt 3 mh)}\ge
    \frac{ \sin(\sqrt 3m h)}{\sin(2\sqrt 3 mh)} =
    \frac{ 1}{2\cos(\sqrt 3 mh)} \ge \frac 12.
\end{align*}
By the relation \eqref{eq:nonzero_elem}, the entry  $Op^{(N)}_{(i,j),(i-t,j-s)}$ is zero when
\begin{equation}
\label{eq:relation_for_zero_elements_when_j_is_large}     \min\{|t|,N-|t|\}^2 \sin(  jh )\sin((j-s)h) \ge  \min\{|t|,N-|t|\}^2 \sin(  jh )^2/2 \ge 3m^2 \ge 3m^2 -s^2
\end{equation}
so we can restrict the index $t$ to $ \min\{|t|,N-|t|\}< \sqrt 6m/\sin(jh)$ in the sum and find that
\begin{align*}
  &\sum_{N>|t|>M} \sum_{2\sqrt 3 m<j<N-2\sqrt 3 m }
  (N-|t|)\frac{ 36(2M+1)\sin(jh)^2}{\pi^2 m^4}  + O(1) \\
  &=\sum_{2\sqrt 3 m<j<N-2\sqrt 3 m }
  \sum_{\substack{N>|t|>M\\\min\{|t|,N-|t|\}< \sqrt 6m/\sin(jh)}}
  (N-|t|)\sin(jh)^2O(1)  + O(1)\\
  &\le\sum_{2\sqrt 3 m<j<N-2\sqrt 3 m }
  2\left[ N \left(\frac {\sqrt 6 m}{\sin(jh)} - M +1\right)^+ + \frac {6m^2}{\sin(jh)^2} \right]\sin(jh)^2O(1)  +   \frac 1{\sin(jh)}O(1)\\
&\le\sum_{1\le j< N}
   \left(\sqrt 6 m - (M-1)\sin(jh)\right)^+ \sin(jh)O(N) + O(1)  +   \frac 1{\sin(jh)}O(1).
\end{align*}
Since $\sin(jh) = \sin((N-j)h)$, we can bound the sum by doubling it and stop  the index at $j=\lfloor N/2\rfloor$. Under this hypothesis we have the bounds $jh\ge \sin(jh)\ge 2jh/\pi$  leading to $\sum_{j=1}^{\lfloor N/2\rfloor} 1/\sin(jh)\le \sum_{j=1}^{\lfloor N/2\rfloor} \pi/(2jh) = O(N\log(N))$ and
\begin{align*}
  &\sum_{N>|t|>M} \sum_{2\sqrt 3 m<j<N-2\sqrt 3 m }
  (N-|t|)\frac{ 36(2M+1)\sin(jh)^2}{\pi^2 m^4}  + O(1) \\
&\le
\left[ \sum_{1\le j\le \lfloor N/2\rfloor}
   \left(\sqrt 6 m - \frac{2(M-1)h}\pi j\right)^+ jO(1) \right] +   O(N\log(N)).
   \\
&\le
\left[ \sum_{1\le j< \frac{\sqrt 6\pi m}{2(M-1)h}}
   \sqrt 6 m \cdot jO(1) \right]+   O(N\log(N))
  \\& \le \frac{ 6\pi^2 m^2}{4(M-1)^2h^2}  O(1) +   O(N\log(N))
   =\frac{1}{(M-1)^2}  O(N^2) +   O(N\log(N)).
\end{align*}
Putting all terms together, we find that
\[
 \|Op^{(N)} - A_M^{(N)}\|_F^2 = O(1) + O(N) + \frac{1}{(M-1)^2}  O(N^2) +   O(N\log(N)) = \left[ \frac{1}{(M-1)^2}  O(1) +   O\left(\frac{\log(N)}{N}\right)  \right] \cdot N^2
\]
and we conclude thanks to \hyperref[ACS4]{\textbf{ACS\,4}} with $p=2$ and $$\lim_{M\to\infty}\limsup_{N\to\infty}\varepsilon(M,N) = \lim_{M\to\infty}\limsup_{N\to\infty}\sqrt {\frac{1}{(M-1)^2}  O(1) +   O\left(\frac{\log(N)}{N}\right)} = 0.$$
\end{proof}

\subsubsection{Spectral Symbol}

Here we show that $\{Op^{(N)} \}_N$ has spectral symbol $\kappa(\bm x,\bm\theta)$. To do so, we need to prove that it is close enough to an Hermitian matrix in Frobenius norm, i.e. its skew-Hermitian part is small, and conclude thanks to \hyperref[GLT2]{\textbf{GLT\,2}}.

\begin{theorem}
  \label{th:spectral_symbol}     If $RN$ is constant for any $N$, and $Op$ is as in \eqref{eq:sifting_matrix_op}, then
\[
    \{Op^{(N)}\}_N \sim_{\lambda}\kappa(\bm x,\bm \theta) =  \sum_{t,s\in \mathbb Z} a_{t,s}(\bm x) \exp(\textnormal i(t\theta_1 + s\theta_2))
    \]
\end{theorem}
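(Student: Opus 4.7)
The plan is to apply property \hyperref[GLT2]{\textbf{GLT 2}} to the GLT identification provided by Theorem \ref{thm:AN_has_right_symbol}. I would decompose $Op^{(N)} = X^{(N)} + Y^{(N)}$, where $X^{(N)} = (Op^{(N)} + (Op^{(N)})^T)/2$ is real symmetric (hence Hermitian) and $Y^{(N)} = (Op^{(N)} - (Op^{(N)})^T)/2$ is the skew-symmetric part. Since $\{Op^{(N)}\}_N \sim_{GLT} \kappa$ by Theorem \ref{thm:AN_has_right_symbol}, it then suffices to prove $N^{-1}\|Y^{(N)}\|_F \to 0$ to conclude via \hyperref[GLT2]{\textbf{GLT 2}} that $\{Op^{(N)}\}_N \sim_\lambda \kappa$.

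The first observation is that the filter in \eqref{eq:filter_definition} is symmetric in its two arguments, since it depends only on $d(r,w) = d(w,r)$. Hence the only source of asymmetry in $Op^{(N)}$ is the weight $\sigma(S_{p,q})$ appearing in \eqref{eq:sifting_matrix_op}, and
\[
Op^{(N)}_{(i,j),(p,q)} - Op^{(N)}_{(p,q),(i,j)} = \bigl(\sigma(S_{p,q}) - \sigma(S_{i,j})\bigr)\,f_{z_{i,j}}(z_{p,q}).
\]
By \eqref{eq:nonzero_elem}, a non-zero entry forces $|s| = |j-q| < \sqrt{3}\,m = O(1)$. Combining \eqref{eq:area_rectangles} with $|\sin((j-s)h) - \sin(jh)| = O(|s|h) = O(h)$ gives $|\sigma(S_{p,q}) - \sigma(S_{i,j})| = O(h^3)$. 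Since the filter satisfies $\|f_r\|_\infty \le 2R/(R-\sin R) = O(h^{-2})$ in the regime $R = mh$, each non-zero entry of $Y^{(N)}$ has magnitude $O(h)$.

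The main obstacle is to bound the number of non-zero entries of $Y^{(N)}$, which is delicate because near the poles the grid oversamples the support of the filter. Repeating the argument leading to \eqref{eq:relation_for_zero_elements_when_j_is_large}, for any fixed row $(i,j)$ the non-zero entries correspond to pairs $(t,s)$ with $|s| = O(1)$ and $\min\{|t|, N-|t|\} \le \sqrt{6}\,m/\sin(jh)$, giving $O(1/\sin(jh))$ non-zero entries per row. Summing over rows and using $\sin(jh) \ge 2jh/\pi$ for $j \le N/2$ together with the reflection symmetry $\sin(jh) = \sin((N-j)h)$, I obtain
\[
\#\{\text{non-zero entries of }Y^{(N)}\} \;=\; O(N) \sum_{j=1}^{N}\frac{1}{\sin(jh)} \;=\; O(N^2 \log N).
\]
Combining the entrywise bound with this count yields $\|Y^{(N)}\|_F^2 = O(N^2 \log N)\cdot O(h^2) = O(\log N)$, hence $N^{-1}\|Y^{(N)}\|_F = O(\sqrt{\log N}/N) \to 0$, and \hyperref[GLT2]{\textbf{GLT 2}} delivers the thesis.
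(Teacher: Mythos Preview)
Your proposal is correct and follows essentially the same route as the paper: decompose $Op^{(N)}$ into its Hermitian and skew-Hermitian parts, show each entry of the skew part is $O(h)$ (exploiting the symmetry of the filter and the bound $|s|<\sqrt 3\,m$), and invoke \hyperref[GLT2]{\textbf{GLT 2}} together with Theorem \ref{thm:AN_has_right_symbol}.

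The only difference is that you work harder than necessary on the combinatorics. The paper simply bounds $\|E^{(N)}\|_F^2$ by summing the uniform $O(h^2)$ entrywise bound over \emph{all} positions with $|s|<\sqrt 3\,m$, $|t|<N$, $1\le i,j\le N$---that is $O(N^3)$ terms---obtaining $\|E^{(N)}\|_F^2 = O(N^3)\cdot O(h^2) = O(N) = o(N^2)$ directly. Your careful count of the nonzero positions via \eqref{eq:nonzero_elem} and the harmonic sum $\sum_j 1/\sin(jh)=O(N\log N)$ yields the sharper estimate $\|Y^{(N)}\|_F^2=O(\log N)$, which is nice but not needed for the conclusion. (One small caveat: the bound $\min\{|t|,N-|t|\}\le \sqrt 6\,m/\sin(jh)$ you quote is derived in the paper only for $j$ bounded away from the poles; for the finitely many near-pole rows it holds trivially since $\sqrt 6\,m/\sin(jh)\gtrsim N$ there, so your count is still valid.)
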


\begin{proof}
    Let $E^{(N)}$ be 2 times the skew-Hermitian part of $Op^{(N)}$, i.e. $E^{(N)}:= Op^{(N)} - [Op^{(N)}]^H$. By \eqref{eq:nonzero_elem}, if $|s|\ge \sqrt 3m$ then the elements  $Op^{(N)}_{(i,j),(i-t,j-s)}$ and $[Op^{(N)}]^H_{(i,j),(i-t,j-s)} = Op^{(N)}_{(i-t,j-s),(i,j)}$ are both zero, so $E^{(N)}_{(i,j),(i-t,j-s)}$ is zero too and thus we can focus on the case $s<\sqrt 3 m=O(1)$.
    By definition \eqref{eq:sifting_matrix_op}, equation \eqref{eq:entries_of_Op} and using that the distance $d(r,w)$ is symmetric in $r,w$, we have
\begin{align*}
  E^{(N)}_{(i,j),(i-t,j-s)} = 6[\sin(jh)-\sin((j-s)h)] \frac{(m-d(z_{i,j},z_{i-t,j-s})/h)^+}{\pi m^3} + O(h)
    \end{align*}
but since $|s|<\sqrt 3 m$,
\begin{align*}
  |\sin(jh)-\sin((j-s)h)| = |\sin(jh)[1-\cos(sh)]+\cos(jh)\sin(sh)|
  \le |1-\cos(sh)|+|\sin(sh)| \le \frac {s^2h^2}2 + sh
\end{align*}
and thus $ E^{(N)}_{(i,j),(i-t,j-s)}=O(h)$. As a consequence, its Frobenius norm is bounded by
\[
\|E^{(N)}\|_F^2 = \sum_{|s|<\sqrt 3 m} \sum_{|t|<N} \sum_{ \substack{i,j:\\1\le i,i-t,j,j-s\le N} }
    \left|E^{(N)}_{(i,j),(i-t,j-s)}\right|^2
    =
    N^3O(h^2) = O(N) = o(N^2).
\]
If $\wt{Op}^{(N)} := (Op^{(N)}+[Op^{(N)}]^H)/2 $ is the Hermitian part of $Op^{(N)}$, then
\[
\|Op^{(N)} - \wt{Op}^{(N)}\|_F^2 =
\frac 14\|E^{(N)}\|_F^2 = o(N^2)
\]
and since $ \{Op^{(N)}\}_N \sim_{GLT}\kappa(\bm x,\bm \theta)$ by Theorem \ref{thm:AN_has_right_symbol}, then $\kappa(\bm x,\bm \theta)$ is also the spectral symbol of $\{Op^{(N)}\}_N$ thanks to \hyperref[GLT2]{\textbf{GLT\,2}}.
\end{proof}

\subsubsection{Counterexample to Convergence}


Already for $m=2$ the symbol $\kappa(\bm x,\bm \theta)$ in Theorem \ref{th:spectral_symbol} is negative somewhere on its domain. From \eqref{eq:ats},
\[
\kappa(\bm x,\bm \theta) = \sum_{t,s\in \mathbb Z}
3 \sin(\pi x_2)  \frac{(2-
 \sqrt{ s^2
  + 4t^2   \sin(\pi x_2)^2}
)^+}{4\pi}
\exp(\textnormal i(t\theta_1 + s\theta_2))
\]
but since $m=2$, the only nonzero terms are for $s=1,0,-1$, i.e.
\[
\kappa(\bm x,\bm \theta) = \frac{3 \sin(\pi x_2)}{4\pi} \sum_{t\in \mathbb Z}
\exp(\textnormal it\theta_1)  \left[
 2\cos(\theta_2) (2-
 \sqrt{ 1
  + 4t^2   \sin(\pi x_2)^2}
)^+
+
 2 (1-
 |t|   \sin(\pi x_2)
)^+
\right]
\]
and for $\bm\theta = (0,\pi)$ we can further simplify into
\[
\kappa(\bm x,(0,\pi)) = \frac{3 \sin(\pi x_2)}{2\pi} \sum_{t\in \mathbb Z}
  (1-
 |t|   \sin(\pi x_2)
)^+
-   \left(2-
 \sqrt{ 1
  + 4t^2   \sin(\pi x_2)^2}
\right)^+.
\]
For $x_2\to 0$, both terms in the sum become quadrature formulas that converge to definite integrals. In particular,
\[
\sin(\pi x_2) \sum_{t\in\mathbb Z}   (1- |t|   \sin(\pi x_2) )^+
\xrightarrow[]{x_2\to 0}
\int_{-1}^1 1-|x|\, dx = 1
\] \[
\sin(\pi x_2) \sum_{t\in\mathbb Z}
\left(2-
 \sqrt{ 1
  + 4t^2   \sin(\pi x_2)^2}
\right)^+
\xrightarrow[]{x_2\to 0}
\int_{-1/2}^{1/2} 2 - \sqrt{1+4x^2}\, dx = 2 - \frac \pi 4
\]
and ultimately,
\[
\kappa(\bm x,(0,\pi)) \xrightarrow[]{x_2\to 0} \frac 3{2\pi} \left (\frac\pi 4 -1\right) < -0.1
\]
so $\kappa(\bm x,\bm \theta)$ is negative in an open neighbourhood of the line $((x_1,0),(0,\pi))\in [0,1]^2\times [-\pi,\pi]^2$.

\section{Convergent extension of Iterative Filtering on the sphere}

From all previous results, it is clear that for generic filters, like cone filters, the extension of Iterative Filtering to spherical geometry is not convergent in general. It is possible to create a parallelism between the extension of Iterative Filtering to spherical geometry and the so-called Adaptive Local Iterative Filtering (ALIF) for 1D data. In both cases, the filter changes size and shape point by point. In the case of spherical data this comes as a consequence of the discretization we choose for the sphere. In ALIF this is the main feature of the algorithm. If we vectorize the data defined on the sphere that we want to decompose and treat them as if they were a 1D signal, the decomposition approach can be reformulated as a special case of the ALIF algorithm.

The convergence of ALIF, at least in the formulation presented in \cite{cicone2016adaptive} where the scaling of the filter is done linearly point by point, is not known. But it can be made convergent by multiplying the transpose of the operator associated with the filter with the operator itself. This is what is done in the so-called SALIF algorithm \cite{barbarino2021stabilization}. The idea is that the iterative sifting operator $I-B_m$ applied to a discretized signal $g_m$  tends to extract the component of $g_m$ in a neighbourhood of the kernel of $B_m$ when the condition for convergence is satisfied, i.e. $\rho(I-B_m)\le 1$ and $\lambda \in \Lambda(I-B_m)$, $|\lambda|=1 \implies \lambda =1$. If we modify the sifting operator into $I - B_m^TB_m$ we notice that the convergence condition is now satisfied since $B_m$ is stochastic, and the kernel of $B_m$ is equal to the kernel of $B_m^TB_m$.

Following the same approach, in the extension of iterative filtering to spherical geometry we can substitute the operator $B$ with the operator $B^TB$. In doing so all eigenvalues become real, positive, and contained in the interval $[ 0,\, 1]$. Hence, the algorithm converges a priori. This is what we call from now on the Spherical Iterative Filtering (SIF) technique. The SIF pseudocode is reported in Algorithm \ref{alg:SIF}.

\begin{algorithm}
\caption{\textbf{Spherical Iterative Filtering} IMF = SIF$(g)$}\label{alg:SIF}
\begin{algorithmic}
\STATE IMF = $\left\{\right\}$
\WHILE{the number of extrema of $g$ $\geq 2$}
      \STATE $g_1 = g$
      \WHILE{the stopping criterion is not satisfied}
                  \STATE  compute the filter length $l_m$ for $g_{m}(x)$ and generate the corresponding convolution matrix $F_m$
                  \STATE  $g_{m+1}(x) = (I-B_m^TB_m) g_{m}(x)$
                  \STATE  $m = m+1$
      \ENDWHILE
      \STATE IMF = IMF$\,\cup\,  \{ g_{m}\}$
      \STATE $g=g-g_{m}$
\ENDWHILE
\STATE IMF = IMF$\,\cup\,  \{ g\}$
\end{algorithmic}
\end{algorithm}

\section{Numerical examples}\label{sec:Num_Ex}

In this section, we run numerical tests of the theoretical results presented in this work\footnote{Codes are available at \url{www.cicone.com}}. In particular, in a first test we study the spectrum of the matrix $B$ associated with the generalization of Iterative Filtering algorithm via straight isotropic convolution. In a second test we apply both the direct generalization of Iterative Filtering algorithm via isotropic convolution and the proposed convergent Spherical Iterative Filtering method to an artificial signal.

\subsection{Test 1}

In this first test, we evaluate the eigenvalues of the matrix $B$, as defined in \eqref{eq:B_matrix}, for the radius value of $\frac{\pi}{10}$ and a mesh grid as in \eqref{eq:center_points_grid_in_polar} with $N=100$.

\begin{figure}[ht]
    \centering
    \includegraphics[width=0.45\linewidth]{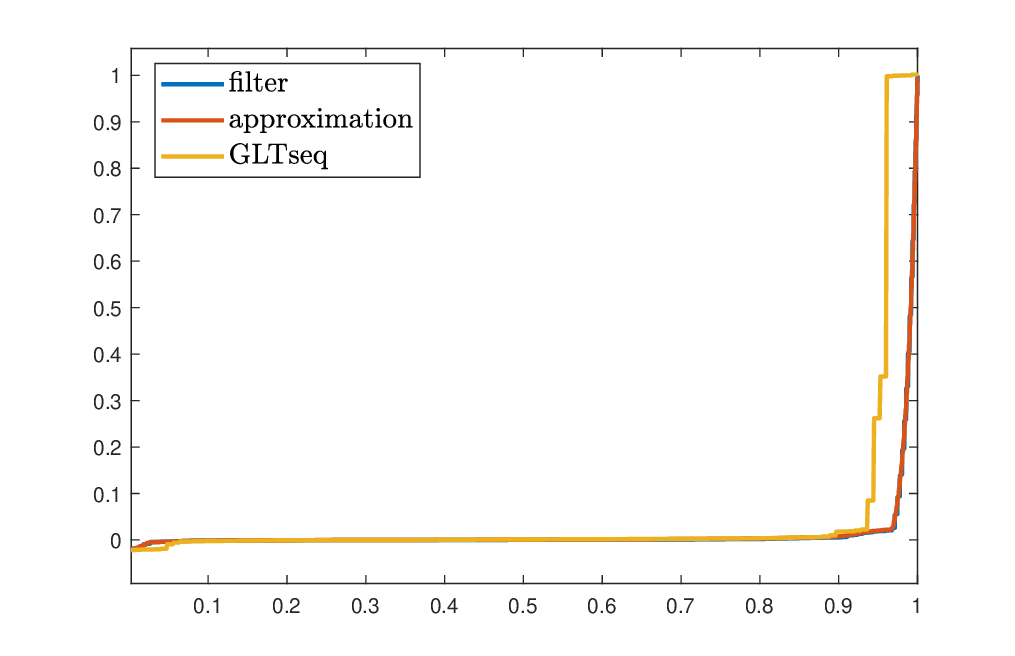}
    \includegraphics[width=0.45\linewidth]{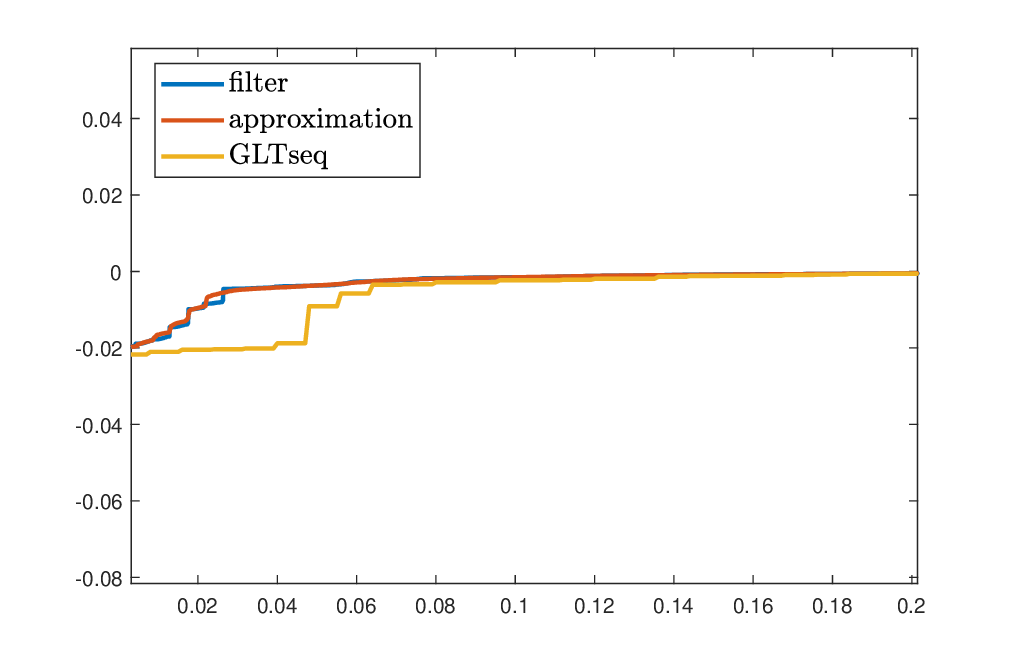}
    \caption{In the left panel, we find the real part of the eigenvalues of $B$ obtained using: double integration, solid blue; the approximation described in \eqref{eq:entries_of_Op}, solid red; and GLT sequence approximation, solid yellow. In the right panel we find a zoomed-in version of the same plot on the horizontal interval $[0,\, 0.2]$.}
    \label{fig:eig}
\end{figure}

In Figure \ref{fig:eig} we report the real value of the eigenvalues of $B$ in increasing order computed using double integration, reported in solid blue, the approximation described in \eqref{eq:entries_of_Op}, solid red, and the approximation obtained using the GLT sequence. This last one, in particular, is known to represent the spectrum of the matrix up to a $o(n)$ of eigenvalues which cannot be approximated. If we zoom in the horizontal axis on the interval $[0,\, 0.2]$, we can see that all approaches confirm the presence of eigenvalues with negative real parts. If we increase the grid size to $N=200$ and $N=1000$ we confirm the presence of eigenvalues with negative real parts, Figure \ref{fig:GLT}.

We recall that for the spectrum of $B^TB$, which is the stabilized version of the operator $B$ which is used in the SIF algorithm, there is no need  to approximate its eigenvalues since it is known a priori that the are all real and positive.

\begin{figure}[ht]
    \centering
    \includegraphics[width=0.45\linewidth]{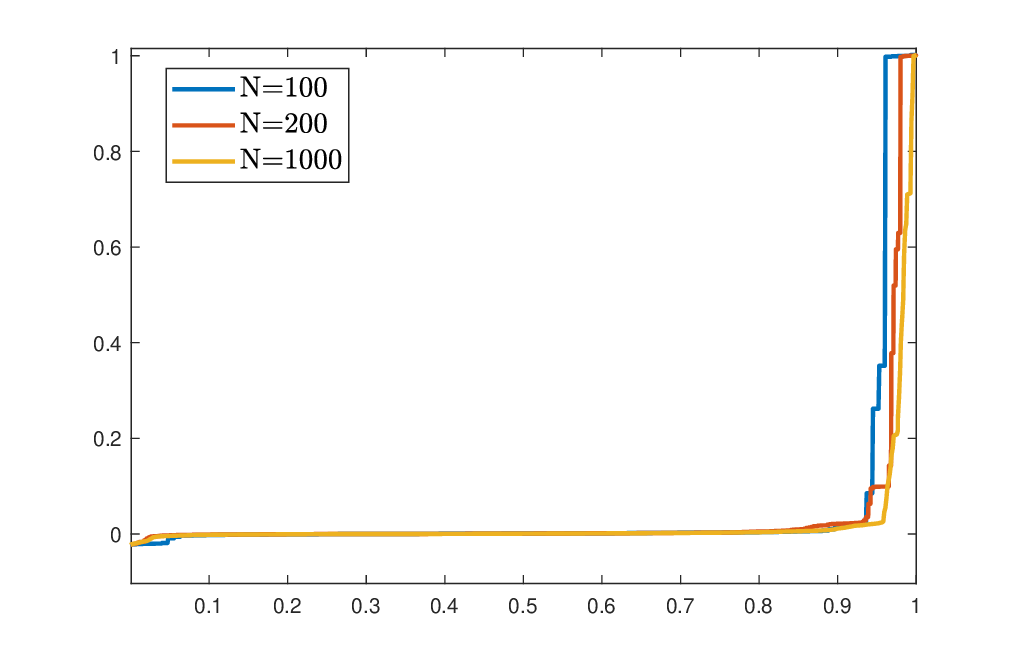}
    \includegraphics[width=0.45\linewidth]{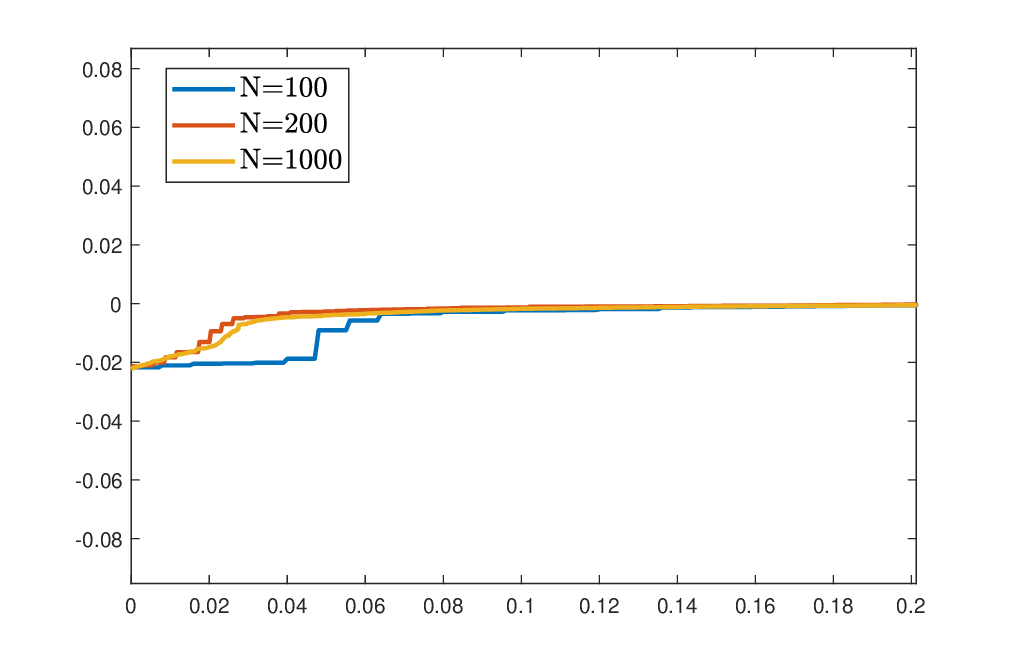}
    \caption{In the left panel we show the real part of the eigenvalues for the GLT sequences associated with mesh grids with increasing precision. In the right panel we find a zoomed-in version of the same plot on the horizontal interval $[0,\, 0.2]$.}
    \label{fig:GLT}
\end{figure}

\subsection{Test 2}

In this second test, we apply the generalization of Iterative Filtering algorithm via straight isotropic convolution and the SIF method, presented in Algorithm \ref{alg:SIF}, to an artificial signal defined on the sphere, ref. Figure \ref{fig:testNonConv} left panel. The artificial signal is constructed to contain two circular waves with different periodicities centered at different locations on the sphere.

\begin{figure}[ht]
    \centering
    \includegraphics[width=0.3\linewidth]{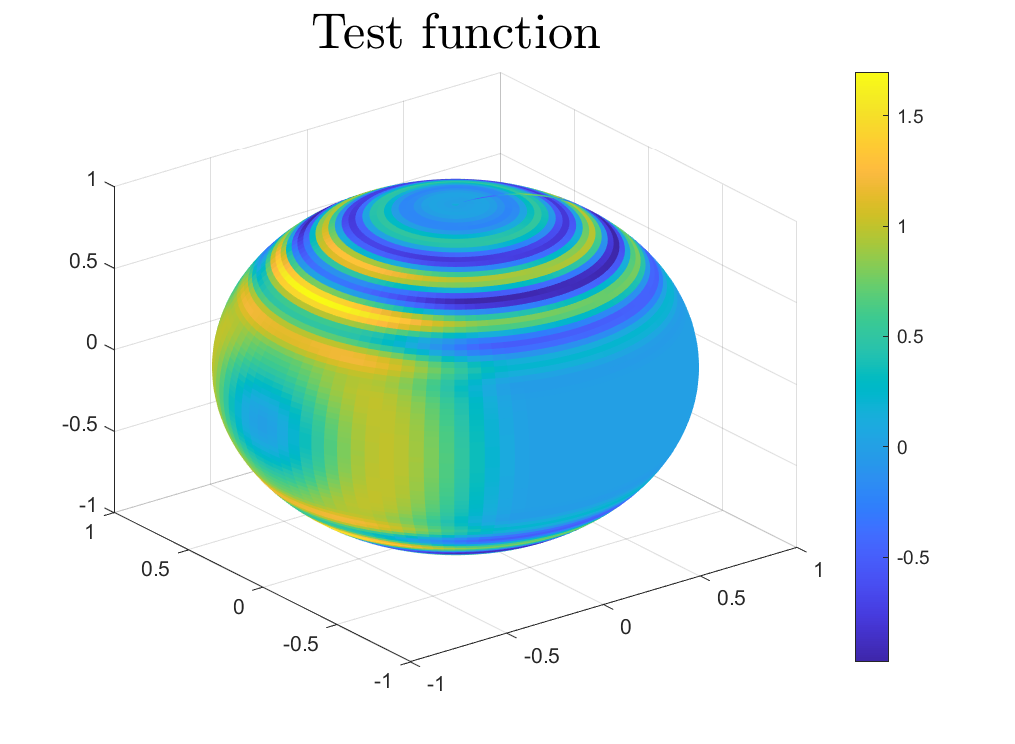}
    \includegraphics[width=0.3\linewidth]{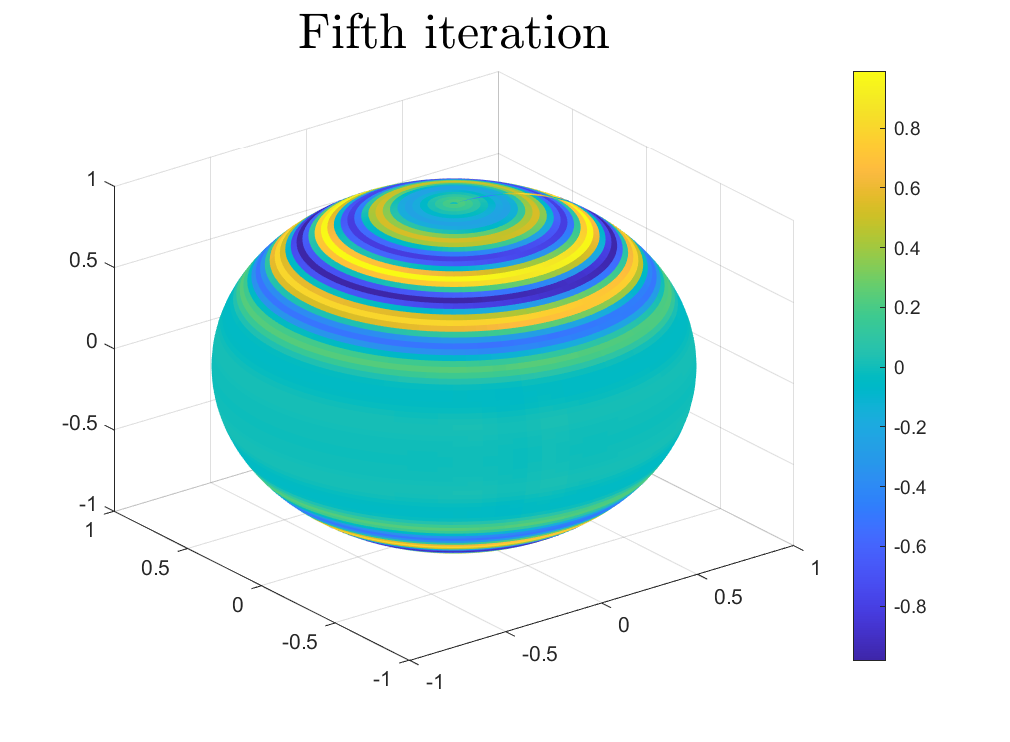}
    \includegraphics[width=0.3\linewidth]{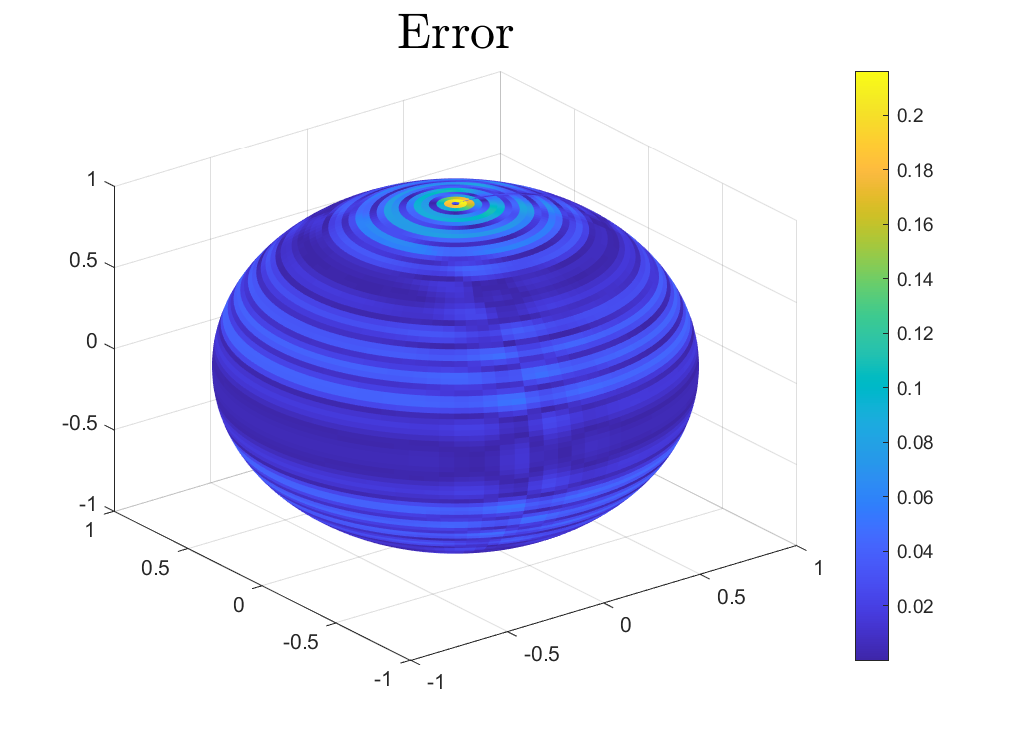}
    \caption{Left panel: test signal containing two circular waves of different periodicity. Central panel: first component obtained applying the Sifting operator $I-B$ to the signal shown on the left panel for $5$ iterations. Right panel: the error computed as the absolute value of the difference between the component obtained after $5$ iterations and the ground truth component we want to extract.}
    \label{fig:testNonConv}
\end{figure}

We first apply the nonconvergent matrix $B$ defined in \eqref{eq:B_matrix} for the radius value of $\frac{\pi}{20}$ and a mesh grid as in \eqref{eq:center_points_grid_in_polar} with $N=100$. The real parts of its spectrum elements, approximated using the approaches reviewed in Test 1, is shown in the right panel in Figure \ref{fig:itErr}. Results of the application of 5 iterations of the generalization of Iterative Filtering algorithm via straight isotropic convolution are shown in the center and right panel of Figure \ref{fig:testNonConv}. In particular, the central panel shows the first IMF extracted by the algorithm after 5 iterations, whereas the right panel represent the difference between this extracted IMF and the ground truth component. It is important to mention that the stopping condition \eqref{eq:stopping_criterion}, which is necessary in order to stop the iterative application of the Sifting operator, is not satisfied for any iteration up to 200 and the reported component in the central panel of Figure \ref{fig:testNonConv} is the closest sifted signal to the ground truth reached by the algorithm in all the 200 iterations.  This is another consequence of the divergence of the operator $B$.

\begin{figure}[ht]
    \centering
    \includegraphics[width=0.3\linewidth]{testStart.eps}
    \includegraphics[width=0.3\linewidth]{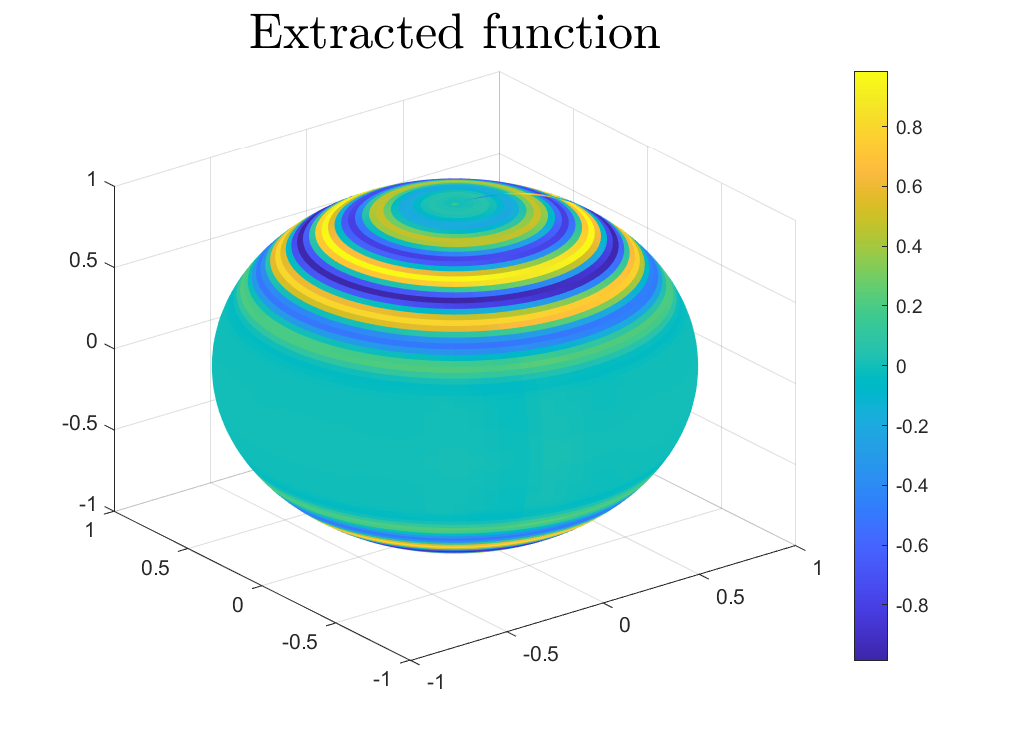}
    \includegraphics[width=0.3\linewidth]{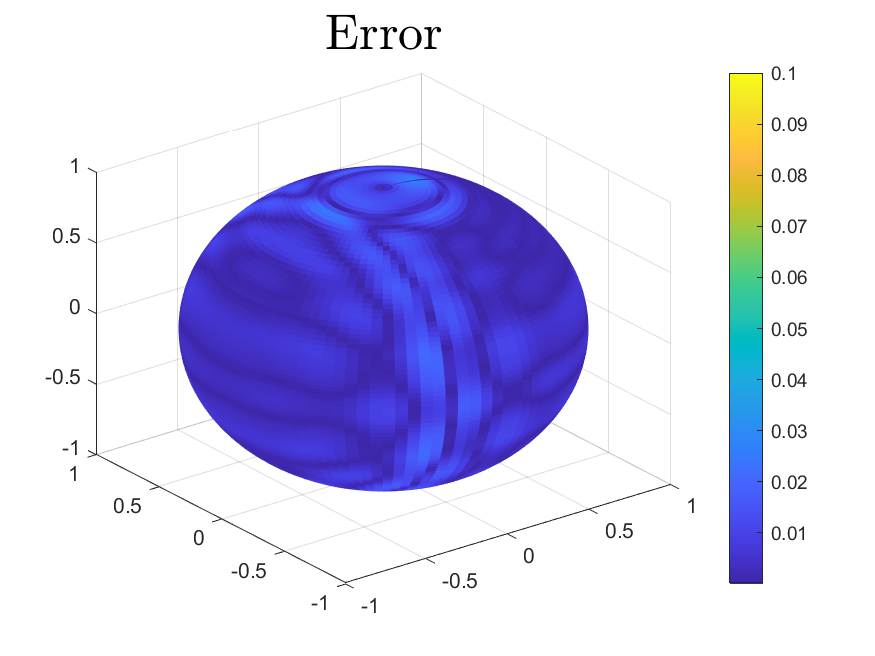}
    \caption{Left panel: test signal containing two circular waves of different periodicity. Central panel: first component obtained applying SIF to the signal shown on the left panel for $5$ iterations. Right panel: the error computed as the absolute value of the difference between the IMF obtained after $5$ iterations and the ground truth component.}
    \label{fig:testConv}
\end{figure}

In Figure \ref{fig:testConv} we report the results obtained by applying the convergent SIF method to the same signal until the stopping criterion \eqref{eq:stopping_criterion} is satisfied. In this case the algorithm iterates 5 times. From the right panel in Figure \ref{fig:testConv}, we can see that the error is drastically reduced. It is interesting to notice that the error is stronger in the region of the sphere where the two waves meet.

\begin{figure}[ht]
    \centering
    \includegraphics[width=0.4\linewidth]{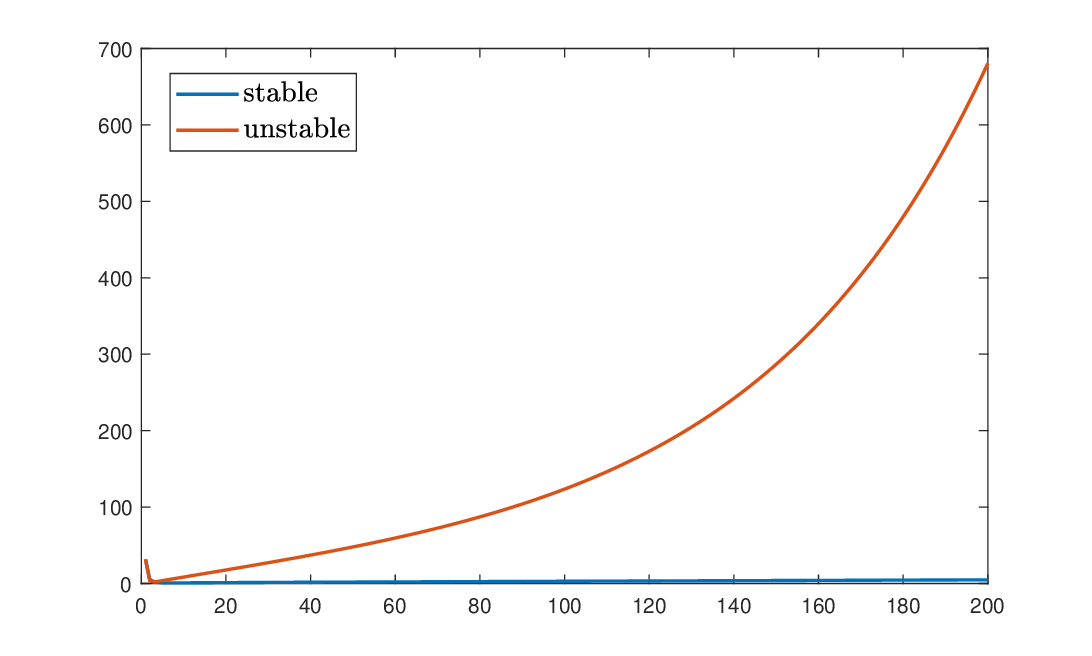}
    \includegraphics[width=0.4\linewidth]{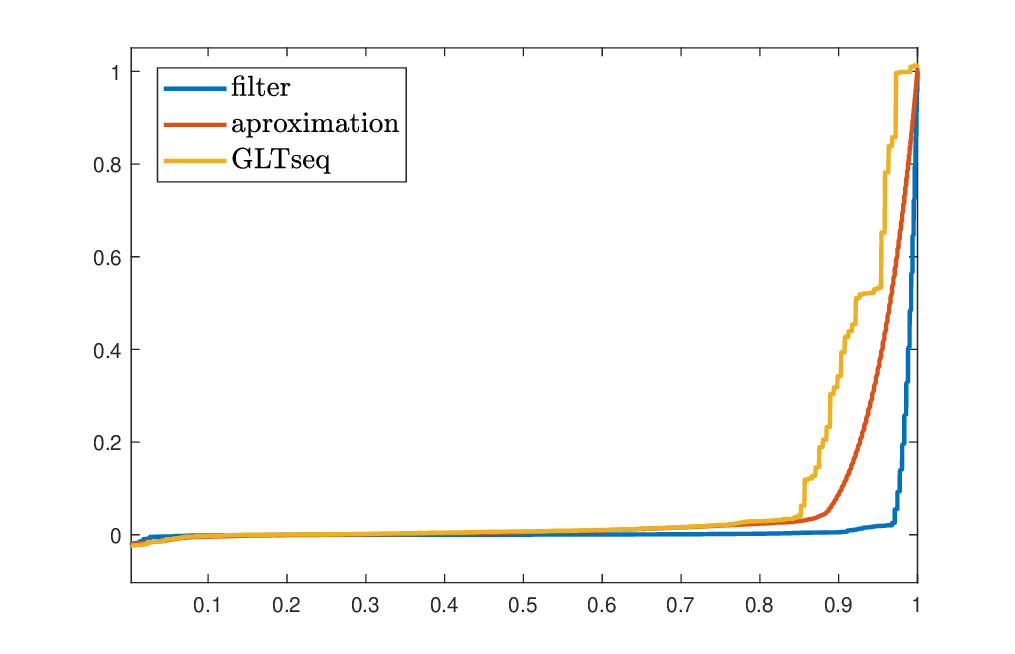}
    \caption{Left panel: $L^2$ norm curves of the error for the first $200$ iterations of the generalization of Iterative Filtering algorithm via straight isotropic convolution and SIF. Right panel: a plot of the real part of the eigenvalues for the matrix $B$ associated with a conic filter of radius $\frac{\pi}{20}$ as computed via double integration, solid blue, the approximation described in \eqref{eq:entries_of_Op}, solid red, and GLT sequence approximation, solid yellow.}
    \label{fig:itErr}
\end{figure}

If we let both algorithms to run for 200 iterations and measure the $L^2$ norm of the error between the computed IMF and its ground truth, we obtain the curves shown in the left panel of Figure \ref{fig:itErr}. It is evident from these curves that the generalization of Iterative Filtering algorithm via straight isotropic convolution is unstable and causes a strong energy injection, while the error associated with the SIF decomposition is bounded, having its minimum when the stopping criterion is achieved after 5 iterations. The right panel of Figure \ref{fig:itErr} confirms the presence of eigenvalues with negative real parts in the spectrum of the matrix $B$ used in this example.

\section{Conclusions}\label{sec:Conclusions}

Given the importance of developing new nonlinear algorithms for the decomposition of spherical data sets, in this work we tackled the problem of extending the Iterative Filtering (IF) algorithm to the case of the sphere and studying its a priori convergence. After reviewing the basic properties of IF for the one-dimensional case, we introduce its extension to spherical domains in the continuous and discrete case. We leverage on the Generalized Locally Toeplitz (GLT) theory to study the convergence property of this extension of IF to the sphere. In particular, after recalling properties of 2-level GLT sequences, we have studied how to characterize spectrally the matrix associated to the discrete sifting operator in the spherical iterative filtering. From this analysis we discover that the spherical iterative filtering algorithm is not guaranteed to converge, at least if we use conic filters. Following what was done in the literature for stabilizing the adaptive local iterative filtering, in this work we propose to stabilize the algorithm by multiplying the sifting operator times its transpose on the left. In doing so, the spectrum of the new operator is guaranteed to be real and contained in the interval $[0,\, 1]$, hence the algorithm converges a priori. We have presented numerical evidences of all these results in the numerical section. It is still an open question how to interpret from a physical point of view this new operator. We plan to tackle this problem, together with the possible acceleration of the spherical iterative filtering algorithm in a future work.

\section*{Acknowledgment}

A. Cicone is a member of the INdAM Research group GNCS and is supported in part by the Italian Ministry of the University and Research under a PRIN PNRR 2022 grant number E53D23018040001.

\end{document}